\numberwithin{figure}{section}
\theoremstyle{plain}
\newtheorem{thm}{\protect\theoremname}
  \theoremstyle{remark}
  \newtheorem{rem}[thm]{\protect\remarkname}
  \theoremstyle{definition}
  \newtheorem{example}[thm]{\protect\examplename}
  \theoremstyle{plain}
  \newtheorem{question}[thm]{\protect\questionname}
  \theoremstyle{plain}
  \newtheorem{prop}[thm]{\protect\propositionname}
  \theoremstyle{definition}
  \newtheorem{defn}[thm]{\protect\definitionname}
  \providecommand{\definitionname}{Definition}
  \providecommand{\examplename}{Example}
  \providecommand{\propositionname}{Proposition}
  \providecommand{\questionname}{Question}
  \providecommand{\remarkname}{Remark}
\providecommand{\theoremname}{Theorem}
\begin{document}
\subjclass[2010]{14R05 14R25 14E05 14P25 14J26}
\keywords{real algebraic model; affine surface; rational fibration; birational diffeomorphism, affine complexification} 

\author{Adrien Dubouloz}

\address{IMB UMR5584, CNRS, Univ. Bourgogne Franche-Comté, F-21000 Dijon,
France.}

\email{adrien.dubouloz@u-bourgogne.fr}

\author{Frédéric Mangolte}

\address{LUNAM Université, LAREMA, Université d\textquoteright{}Angers. }

\email{frederic.mangolte@univ-angers.fr}

\thanks{This project was partially funded by ANR Grant \textquotedbl{}BirPol\textquotedbl{}
ANR-11-JS01-004-01. }

\title{Real frontiers of fake planes}
\begin{abstract}
In \cite{DM15}, we define and partially classify fake real planes, that is, minimal complex surfaces with conjugation whose real locus is diffeomorphic to the euclidean real plane $\mathbb{R}^{2}$. Classification results are given up to biregular isomorphisms and up to birational diffeomorphisms. In this note, we describe in an elementary way numerous examples of fake real planes and we exhibit examples of such  planes of every Kodaira dimension $\kappa\in \{-\infty,0,1,2\}$ which are birationally diffeomorphic to $\mathbb{R}^{2}$. 
\end{abstract}
\maketitle

\section*{Introduction}

A \emph{complexification} of a real smooth $\mathcal{C}^{\infty}$-manifold $M$ is a real algebraic manifold $S$ (see terminological conventions at the beginning of Section~\ref{sec.gene}) whose real locus is diffeomorphic to $M$: $S(\mathbb{R})\approx M$. Some manifolds such as real projective spaces $\mathbb{RP}^{n}$ and real euclidean affine spaces $\mathbb{R}^{n}$ have natural algebraic complexifications, given by the complex projective and affine spaces $\mathbb{CP}^{n}$ and $\mathbb{C}^{n}$ respectively. But these also admit infinitely many other complexifications, and it is a natural problem to try to classify them up to appropriate notions of equivalence. 
We focus on the case when $M=\mathbb{R}^{2}$ which is already surprisingly involved. Consider the following two examples:

\begin{enumerate} 
\item Start from $S_0=\mathbb{C}^{2}_{u,v}$ with the usual conjugation $(u,v)\mapsto (\overline{u},\overline{v})$. We have $S_0(\mathbb{R})=\mathbb{R}^{2}$. Blowing-up a pair of conjugated non real points of $\mathbb{C}^{2}$, we get a surface $S_1$ which is not isomorphic to $\mathbb{C}^{2}$, actually  not even affine as it contains proper curves, but $S_1(\mathbb{R})\approx\mathbb{R}^{2}$. 
\item Start from $\mathbb{CP}^{2}_{x:y:z}$ and choose a pair of conjugated lines $L,\overline{L}$ meeting at a point on the \emph{line at infinity} $L_{\infty}:=\{z=0\}$. Let $S_2$ be the complement in $\mathbb{CP}^{n}$ of the union $L \cup\overline{L} \cup L_{\infty}$. Then $S_2$ is isomorphic to $\mathbb{C}\setminus\{\text{a pair of conjugated points}\}\times\mathbb{C}$ and $S_2(\mathbb{R})\approx\mathbb{R}^{2}$. 
\end{enumerate}

In the two cases above, the complexification is \emph{topologically far} from $\mathbb{C}^{2}$. So we will seek for \emph{topologically minimal} complexifications of $\mathbb{R}^{2}$ which we call \emph{fake real planes}:

\begin{defn} 
\label{defn.fake}%
A nonsingular quasi-projective complex algebraic surface $S$ endowed with an anti-holomorphic involution is called a \emph{fake real plane} if: 
\begin{enumerate} \item $S$ is a \emph{real plane}: $S(\mathbb{R})\approx\mathbb{R}^{2}$; \item $S$ is \emph{topologically minimal}: $H_k(S;\mathbb{Q})=0$ for all $k\geq 1$; \item $S$ is \emph{fake}: $S$ is not biregularly isomorphic 
to $\mathbb{C}^{2}$ as a real algebraic surface. 
\end{enumerate} 
\end{defn}

Every fake real plane $S$ is affine and rational over $\mathbb{R}$, see \S\ \ref{subsec.generalities} below. 
In the projective setting, we obtain complexifications of $\mathbb{RP}^{2}$ in the same way than described in the first example above by blowing-up pairs of conjugated non real points of $\mathbb{CP}^{2}$. Among the rational projective complexifications of $\mathbb{RP}^{2}$, the only minimal one is $\mathbb{CP}^{2}$ because it is the only one with Picard number $\rho=1$. In fact, $\mathbb{CP}^{2}$ is the only topologically minimal complexification of $\mathbb{RP}^{2}$: there exists other smooth complex surfaces with the same homology as $\mathbb{CP}^{2}$, but none of them admits a real structure \cite{KK02}.
In the light of these observations, a first natural question is:

\begin{question}
\label{question.fake}%
Is there any fake real plane at all? \end{question}

An affirmative answer to Question~\ref{question.fake} is given in the article \cite{DM15}, where we introduce and partially classify fake real planes.
In the present paper, as an application of general classification results in \cite{DM15}, we describe in an explicit way numerous examples. 

Let $S$ and $S'$ be two complexifications of a given topological surface $M$. We say that $S$ and $S'$ are \emph{$\mathbb{R}$-biregularly birationally equivalent} if their real loci $S(\mathbb{R})$ and $S'(\mathbb{R})$ have isomorphic Zariski open neighborhoods in $S$ and $S'$, respectively. Equivalently the surfaces $S(\mathbb{R})$ and $S'(\mathbb{R})$ are \emph{birationally diffeomorphic}, that is: 
\begin{defn} 
\label{defn.birdif}%
\label{defn.rectif}%
Let $S$ and $S'$ be real algebraic surfaces. Their real loci $S(\mathbb{R})$ and $S'(\mathbb{R})$ are \emph{birationally diffeomorphic} if there is a diffeomorphism $f \colon S(\mathbb{R}) \to S'(\mathbb{R})$ which extends as a real birational map $\psi \colon S \dashrightarrow S'$ whose indeterminacy locus does not intersect $S(\mathbb{R})$, and such that the indeterminacy locus of $\psi^{-1}$ does not intersect $S'(\mathbb{R})$. 

A real algebraic surface $S$ is \emph{rectifiable} if its real locus $S(\mathbb{R})$ is birationally diffeomorphic to $\mathbb{R}^{2}$.
\end{defn}

It is well-known that every rational projective complexification of $\mathbb{RP}^{2}$ is $\mathbb{R}$-biregularly birationally equivalent to  $\mathbb{RP}^{2}$, even dropping the topological minimality condition, see \cite{Ma15}. Thus next natural questions are:
\begin{question}  
\label{question.rectif}%
 Is there any rectifiable fake real plane? 
\end{question}

\begin{question} 
\label{question.rectif.all}%
 Let $S$ be a fake real plane. Is $S$ rectifiable? 
 \end{question}
 
 Again the answer to Question~\ref{question.rectif} is affirmative; examples of rectifiable fake real planes of Kodaira dimensions $-\infty$ and $0$ were given in \cite{DM15}. In the present paper, we exhibit infinite families of rectifiable fake real planes in every Kodaira dimension.
In spite of this, Question~\ref{question.rectif.all} remains open.

\bigskip
The idea of this paper emerges thanks to the conference \emph{Frontiers of rationnality} which took place in Spitsbergen (Norway) in July 2014. The word "frontier" in our title is an attempt to give credit to this exceptional event.

\section{Generalities on fake real planes}
\label{sec.gene}%
\subsubsection*{Terminological conventions} 
\begin{enumerate} \item A \emph{real algebraic manifold} of dimension $n$ is a quasi-projective complex algebraic manifold $S$ of complex dimension $n$ endowed with an anti-holomophic involution whose set of fixed points is called the \emph{real locus} and denoted by $S(\mathbb{R})$. A \emph{real map} is a complex map commuting with involutions. A \emph{real algebraic surface} is a real algebraic manifold of dimension $2$. The manifolds $S$ and $S(\mathbb{R})$ are assumed to be endowed with their euclidean topology.
\item A \emph{topological surface} is a real $2$-dimensional $\mathcal{C}^\infty$-manifold. By our convention, a real algebraic surface $S$ is nonsingular; as a consequence, if nonempty, the real locus $S(\mathbb{R})$ gets a natural structure of a topological surface when endowed with the euclidean topology. Furthermore $S(\mathbb{R})$ is compact if $S$ is projective. \end{enumerate}

\subsection{Generalities on topologically minimal real planes }
\label{subsec.generalities}%
By virtue of results of Fujita \cite{Fu82} and Gurjar-Sashtri \cite{GuP97,GuPS97}
every smooth $\mathbb{Q}$-acyclic complex surface $S$ is affine
and rational. If $S$ is real, then its admits a real completion $S\hookrightarrow(V,B)$
into a smooth real projective surface $V$ with connected
real boundary curve $B=V\setminus S$. In the case where $S$ is a
real plane, the connectedness of $S(\mathbb{R})\approx\mathbb{R}^{2}$
implies that $V$ is a rational complex surface with connected real
locus, hence an $\mathbb{R}$-rational real projective surface, i.e.
a real surface admitting a real birational map $V\dashrightarrow\mathbb{CP}^{2}$
to $\mathbb{CP}^{2}$ equipped with its standard real structure. The
free abelian group $\mathbb{Z}\langle B\rangle$ generated by the
irreducible components of $B$ and the divisor class group $\mathrm{Cl}(V)$
of $V$ both inherits a structure of $G$-module for the group $G=\{1,\sigma\}\simeq\mathbb{Z}_{2}$
generated by the real structure $\sigma$ on $V$. The inclusion $j\colon B\hookrightarrow V$
gives rise to a homomorphism $j_{*}\colon \mathbb{Z}\langle B\rangle\rightarrow\mathrm{Cl}(V)$
of $G$-module, hence to a induced homomorphism $H^{2}(j_{*})\colon H^{2}(G,\mathbb{Z}\langle B\rangle)\rightarrow H^{2}(G,\mathrm{Cl}(V))$
of $\mathbb{Z}_{2}$-vector spaces between the Galois cohomology groups
$H^{2}(G,M)=\mathrm{Ker}(\mathrm{id}_{M}-\sigma)/\mathrm{Im}(\mathrm{id}_{M}+\sigma)$,
$M=\mathbb{Z}\langle B\rangle,\mathrm{Cl}(V)$. The next theorem will
be the most useful for the constructions presented below: 
\begin{thm}
\label{thm:Top-carac}
Let $(V,B)$ be a pair consisting of an $\mathbb{R}$-rational
projective real surface $V$ and a real curve $B\subset V$. Then
the surface $S=V\setminus B$ is an $A$-acyclic, where $A=\mathbb{Z}$
or $\mathbb{Q}$, real plane if and only if the conditions are satisfied:

a) $V$ and $B$ are connected and simply connected and $j_{*}\otimes_{\mathbb{Z}}A\colon \mathbb{Z}\langle B\rangle\otimes_{\mathbb{Z}}A\rightarrow\mathrm{Cl}(V)\otimes_{\mathbb{Z}}A$
is an isomorphism. 

b) The real locus of $B$ is nonempty and $H^{2}(j_{*})\colon H^{2}(G,\mathbb{Z}\langle B\rangle)\rightarrow H^{2}(G,\mathrm{Cl}(V))$
is an isomorphism. \end{thm}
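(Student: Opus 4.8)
The plan is to split the equivalence into two independent statements: that (a) is equivalent to $A$-acyclicity of $S=V\setminus B$, and that, granting (a), condition (b) is equivalent to $S(\mathbb{R})\approx\mathbb{R}^{2}$; together these give the theorem, since an $A$-acyclic real plane is by definition a surface satisfying both.

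First I would treat the complex acyclicity by the usual Poincar\'e--Lefschetz duality argument. Since $V$ is a smooth projective rational surface it is connected and simply connected, with $H^{0}(V;A)=H^{4}(V;A)=A$, $H^{1}(V;A)=H^{3}(V;A)=0$ and $H^{2}(V;A)=\mathrm{Cl}(V)\otimes_{\mathbb{Z}}A$, the cup product form on $H^{2}(V;\mathbb{Z})$ being unimodular so that the latter group is torsion free and self dual. Combining the duality isomorphisms $H_{k}(S;A)\cong H^{4-k}_{c}(S;A)$ with the long exact sequence $\cdots\to H^{k}_{c}(S;A)\to H^{k}(V;A)\to H^{k}(B;A)\to H^{k+1}_{c}(S;A)\to\cdots$ of the closed pair $(V,B)$, and identifying the restriction $H^{2}(V;A)\to H^{2}(B;A)$ with the $A$-linear dual of $j_{*}\colon H_{2}(B;A)\to H_{2}(V;A)$ (here $H_{2}(B;\mathbb{Z})\cong\mathbb{Z}\langle B\rangle$ once every component of $B$ is a rational curve, which is forced by simple connectedness of $B$), one reads off: $H_{4}(S;A)=0$ always; $H_{3}(S;A)=0$ exactly when $B$ is connected; and $H_{2}(S;A)=H_{1}(S;A)=0$ exactly when $H^{1}(B;A)=0$, equivalently $B$ is simply connected, and $j_{*}\otimes_{\mathbb{Z}}A$ is an isomorphism. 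As $V$ is automatically connected and simply connected, this is precisely (a).

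For the real locus I would first use the classification of surfaces: a connected topological surface without boundary is homeomorphic to $\mathbb{R}^{2}$ if and only if it is non-compact and $H_{1}(\cdot;\mathbb{Z}/2)=0$ (such a surface must then be orientable of genus zero with a single end). Then, $V(\mathbb{R})$ being a compact connected surface because $V$ is $\mathbb{R}$-rational, I apply mod-$2$ Poincar\'e--Lefschetz duality $H_{k}(S(\mathbb{R});\mathbb{Z}/2)\cong H^{2-k}_{c}(S(\mathbb{R});\mathbb{Z}/2)$ together with the cohomology exact sequence of the closed pair $(V(\mathbb{R}),B(\mathbb{R}))$, using that $B(\mathbb{R})$ has cohomological dimension at most one. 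This yields: $S(\mathbb{R})$ is non-compact if and only if $B(\mathbb{R})\neq\emptyset$; and $S(\mathbb{R})$ is connected with $H_{1}(S(\mathbb{R});\mathbb{Z}/2)=0$ if and only if $B(\mathbb{R})$ is connected and the restriction map $H^{1}(V(\mathbb{R});\mathbb{Z}/2)\to H^{1}(B(\mathbb{R});\mathbb{Z}/2)$ is an isomorphism.

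It then remains to identify the conditions just obtained with (b), and this last matching is where the substance lies and what I expect to be the main obstacle. One identifies $H^{2}(G,\mathrm{Cl}(V))$ with $H^{2}(G,H_{2}(V(\mathbb{C});\mathbb{Z}))$ and $H^{2}(G,\mathbb{Z}\langle B\rangle)$ with $H^{2}(G,H_{2}(B(\mathbb{C});\mathbb{Z}))$ -- the cycle class maps being $G$-equivariant and $H_{2}(B(\mathbb{C});\mathbb{Z})\cong\mathbb{Z}\langle B\rangle$ as $G$-modules since $B$ is simply connected -- under which $H^{2}(j_{*})$ becomes the map induced on $H^{2}(G,-)$ by the pushforward $j_{*}\colon H_{2}(B(\mathbb{C}))\to H_{2}(V(\mathbb{C}))$. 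One then invokes the Borel--Haefliger dictionary relating the $\mathbb{Z}/2$-cohomology of real loci to the Galois cohomology of the integral homology of complex points: since $V$ is $\mathbb{R}$-rational it is a Galois-maximal surface, so there is an isomorphism $H^{1}(V(\mathbb{R});\mathbb{Z}/2)\cong H^{2}(G,\mathrm{Cl}(V))$; and since $B$ is a connected simply connected curve, $H^{0}(B(\mathbb{R});\mathbb{Z}/2)$ and $H^{1}(B(\mathbb{R});\mathbb{Z}/2)$ are governed by $H^{2}(G,\mathbb{Z}\langle B\rangle)$ together with the combinatorics of the $\sigma$-orbits of the components of $B$ and of their intersection points. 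Working through this bookkeeping, and using (a) -- in particular that $j_{*}\otimes_{\mathbb{Z}}\mathbb{Q}$ is an isomorphism, which combined with $\mathbb{R}$-rationality of $V$ excludes the degenerate possibilities such as components of $B$ with empty real locus or a connected $B$ with disconnected real locus -- one shows that $B(\mathbb{R})$ being connected and $H^{1}(V(\mathbb{R});\mathbb{Z}/2)\xrightarrow{\sim}H^{1}(B(\mathbb{R});\mathbb{Z}/2)$ is equivalent to $H^{2}(j_{*})$ being an isomorphism. Combined with the two reductions above, this proves the theorem.
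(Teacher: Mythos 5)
Your proposal follows essentially the same route as the paper: part (a) is the Ramanujam--Fujita acyclicity criterion (which you reprove via Poincar\'e--Lefschetz duality and the exact sequence of the pair, using the cycle-map identifications $H_{2}(B;A)\simeq\mathbb{Z}\langle B\rangle\otimes_{\mathbb{Z}}A$ and $H_{2}(V;A)\simeq\mathrm{Cl}(V)\otimes_{\mathbb{Z}}A$), and part (b) combines the characterization of $\mathbb{R}^{2}$ among noncompact surfaces by $\mathbb{Z}_{2}$-acyclicity with the Borel--Haefliger cycle-map dictionary. The final matching of the mod-$2$ conditions on $(V(\mathbb{R}),B(\mathbb{R}))$ with the Galois-cohomological condition in (b), which you leave as ``bookkeeping,'' is precisely the step the paper itself delegates to \cite[Section 2]{DM15}, so your outline is correct and at essentially the same level of detail as the paper's own proof.
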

\begin{proof}
The first assertion is essentially a rephrasing of a classical criterion
established first by Ramanujam \cite{Ram71} and Fujita \cite{Fu82}
which asserts that the complex surface $S$ is $A$-acyclic if and only if $V$ and $B$
are connected and the homomorphism $H_{2}(B;A)\rightarrow H_{2}(V;A)$
induced by the inclusion $B\hookrightarrow V$ is an isomorphism.
Indeed, $H_{2}(B;A)$ is a free abelian group isomorphic to $\mathbb{Z}\langle B\rangle\otimes_{\mathbb{Z}}A$
via the map which associates to every irreducible component of $B$ its
fundamental class in homology while the isomorphism $H_{2}(V;A)\simeq\mathrm{Cl}(V)\otimes_{\mathbb{Z}}A$
follows from that fact that since $V$ is rational, the cycle map
$\mathrm{Cl}(V)\rightarrow H_{2}(V,\mathbb{Z})$ which associates
to every irreducible complex curve $D\subset V$ its fundamental class
is an isomorphism. The second assertion essentially follows from the
combination of the well-known fact that a relatively compact topological
surface $M\subset\overline{M}$ with connected boundary $\overline{M}\setminus M$
is diffeomorphic to $\mathbb{R}^{2}$ if and only if it is connected
and $\mathbb{Z}_{2}$-acyclic with the cycle map construction due
to Borel-Haefliger \cite{BH61}, see \cite[Section 2]{DM15}. \end{proof}
\begin{rem}
\label{Rk:Top-carac-rem}

1) The curve $B$ in the previous proposition
need not be an SNC divisor on $V$, but the criterion implies in particular
that if $S=V\setminus B$ is $A$-cyclic then its inverse image $\tau^{-1}(B)$
in a log-resolution $\tau\colon V'\rightarrow V$ of the pair $(V,B)$ defined
over $\mathbb{R}$ is a real tree of rational curves, that is a tree
of rational curves with an induced action of the real structure $\sigma$
on $V'$. It follows in particular that the real locus of $B$ is
either empty or a connected union of curves homeomorphic to a circle. 

2) In the case where $j_{*}\otimes_{\mathbb{Z}}\mathbb{Q}\colon \mathbb{Z}\langle B\rangle\otimes_{\mathbb{Z}}\mathbb{Q}\rightarrow\mathrm{Cl}(V)\otimes_{\mathbb{Z}}\mathbb{Q}$
is an isomorphism, one infers from the long exact sequence of relative
homology for the pair $(V,B)$ that $H_{1}(S,\mathbb{Z})$ is a torsion
group isomorphic to $\mathrm{Cl}(V)/\mathrm{Im}(j_{*})$.
\end{rem}

\begin{example}
\label{Ex:conic-counterexample} The complement $S$ of a smooth real
conic $B$ in $\mathbb{CP}^{2}$ is a $\mathbb{Q}$-acyclic real surface
which is a not a real plane. It can be seen directly that $S(\mathbb{R})$
is either diffeomorphic to $\mathbb{R}\mathbb{P}^{2}$ if $B(\mathbb{R})=\emptyset$
or to the disjoint union of $\mathbb{R}^{2}$ with a Möebius band
otherwise. In the setting of Theorem~\ref{thm:Top-carac} above, the image of the generator
$[B]$ of $\mathbb{Z}\langle B\rangle\simeq\mathbb{Z}\cdot[B]\simeq H_{2}(B;\mathbb{Z})$
by $j_{*}\colon \mathbb{Z}\langle B\rangle\rightarrow\mathrm{Cl}(\mathbb{C}\mathbb{P}^{2})\simeq H_{2}(\mathbb{CP}^{2};\mathbb{Z})\simeq\mathbb{Z}\cdot[\ell]$
where $[\ell]$ denotes the class of a real line, is equal to $2\cdot[\ell]$.
So $S$ is $\mathbb{Q}$-acyclic, with $H_{1}(S;\mathbb{Z})\simeq\mathbb{Z}_{2}$,
but not $\mathbb{Z}$-acyclic. Furthermore, since in this particular
situation we have isomorphisms $H^{2}(G,\mathbb{Z}\langle B\rangle)\simeq\mathbb{Z}\langle B\rangle\otimes_{\mathbb{Z}}\mathbb{Z}_{2}$
and $H^{2}(G,\mathrm{Cl}(V))\simeq\mathrm{Cl}(V)\otimes_{\mathbb{Z}}\mathbb{Z}_{2}$,
$H^{2}(j_{*})\colon H^{2}(G,\mathbb{Z}\langle B\rangle)\rightarrow H^{2}(G,\mathrm{Cl}(V))$
is then the trivial map.
\end{example}

\section{Fake planes of negative Kodaira dimension }

It turns out that $\mathbb{C}^{2}$ equipped with its standard real
structure is the only $\mathbb{Z}$-acyclic real plane $S$ of negative
Kodaira dimension. Indeed, a complex $\mathbb{Z}$-acyclic surface
of negative Kodaira dimension is isomorphic to $\mathbb{C}^{2}$ by
virtue of \cite{MS80} while every real structure on $\mathbb{C}^{2}$
is isomorphic to the standard one, as a consequence of \cite{Kam75}.
In this section, we briefly review general geometric properties of
$\mathbb{Q}$-acyclic fake planes. Then as a particular instance of
a large class of such planes which are known since \cite{DM15} to be $\mathbb{R}$-biregularly
birationally equivalent to $\mathbb{C}^{2}$, see Theorem~\ref{thm:Real-plane-NegKappa-Rectif} below, we construct a pair of 
non isomorphic $\mathbb{Q}$-acyclic fake planes of negative Kodaira
dimension with the same homology groups, whose real loci are both
birationally diffeomorphic to $\mathbb{R}^{2}$.

\subsection{Basic properties of $\mathbb{Q}$-acyclic fake planes of negative
Kodaira dimension}

By virtue of \cite{MS80}, the negativity of the Kodaira dimension
of a smooth complex affine surface $S$ is equivalent to the existence
of an $\mathbb{A}^{1}$-fibration $\rho\colon S\rightarrow C$, that is,
a fibration with general fibers isomorphic to $\mathbb{C}$, over
a smooth curve $C$. One direction is clear for if $\rho\colon S\rightarrow C$
is such a fibration, then $C$ contains a nonempty Zariski open subset
$C_{0}$ over which $\rho$ restricts to a trivial bundle $C_{0}\times\mathbb{C}$,
and so $\kappa(S)\leq\kappa(C_{0}\times\mathbb{C})=-\infty$ by Iitaka's
easy addition formula. When $S$ is in addition real, there is no
reason in general that the existing $\mathbb{A}^{1}$-fibration $\rho\colon S\rightarrow C$
is a real map: for instance, the complement of a smooth conic $B$ in
$\mathbb{CP}^{2}$ with empty real locus is a real surface of negative
Kodaira dimension without any real $\mathbb{A}^{1}$-fibration $\rho\colon S\rightarrow C$.
Indeed, if such a fibration existed then the closure in $\mathbb{CP}^{2}$
of its fiber over a general real point of $C$ would be a real rational
curve intersecting $B$ in a unique point, necessarily real, which
is impossible. The following theorem shows in particular that this
phenomenon does not occur for real planes: 
\begin{thm}
\label{thm:RealPlan-negKappa-Carac} $($\cite[Theorem 4.1]{DM15}$)$
For a smooth affine real surface $S$ the following are equivalent:

1) $S$ is a $\mathbb{Q}$-acyclic real plane of negative Kodaira dimension.

2) $S$ admits a real $\mathbb{A}^{1}$-fibration $\rho\colon S\rightarrow\mathbb{C}$
whose closed fibers are all isomorphic to $\mathbb{C}$ when equipped
with their reduced structure and whose fibers over the real locus of $\mathbb{C}$
have odd multiplicities.\end{thm}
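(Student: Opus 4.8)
The plan is to prove the two implications separately, exploiting in both directions the correspondence between smooth $\mathbb{A}^1$-fibrations on affine surfaces and completions into Hirzebruch-type surfaces with a distinguished section at infinity. For the implication $(2)\Rightarrow(1)$: suppose $\rho\colon S\to\mathbb{C}$ is a real $\mathbb{A}^1$-fibration with all reduced closed fibers isomorphic to $\mathbb{C}$ and with fibers over $\mathbb{C}(\mathbb{R})=\mathbb{R}$ of odd multiplicity. Negativity of the Kodaira dimension is immediate from Iitaka's easy addition formula (as already recalled in the excerpt), so the work is to show $S$ is a $\mathbb{Q}$-acyclic real plane. First I would choose a real minimal completion $(V,B)$ of $S$ compatible with $\rho$, so that $\rho$ extends to a real $\mathbb{P}^1$-fibration $\bar\rho\colon V\to\mathbb{P}^1$ and $B$ decomposes as the union of a section $S_\infty$ at infinity with the closures of the degenerate fibers of $\rho$; since each reduced fiber of $\rho$ is $\mathbb{C}$, each such fiber closure is a chain of rational curves. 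This makes $B$ a connected simply connected real tree of rational curves, $V$ rational, and one checks that the dual graph of $B$ is contractible, so $V$ and $B$ are connected and simply connected. The map $j_*\otimes\mathbb{Q}$ is then an isomorphism essentially by a rank count, using that $\mathrm{Cl}(V)\otimes\mathbb{Q}$ is spanned by a general fiber, the section $S_\infty$, and the fiber components of $B$, all of which lie in the image of $j_*$ except for the class of a general fiber, which is rationally equivalent to any degenerate fiber and hence also lies in the image. This verifies condition (a) of Theorem~\ref{thm:Top-carac}.

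The heart of the matter is condition (b), and this is where the odd-multiplicity hypothesis enters. The real locus of $B$ is nonempty because, for a real point $c\in\mathbb{C}(\mathbb{R})$, the closure of $\rho^{-1}(c)$ meets $S_\infty$ in a real point. To see that $H^2(j_*)$ is an isomorphism of $\mathbb{Z}_2$-vector spaces one has to trace through the $G$-module structure on $\mathbb{Z}\langle B\rangle$ and $\mathrm{Cl}(V)$: the conjugation $\sigma$ permutes the components of $B$ lying over conjugate pairs of non-real points of $\mathbb{C}$ and fixes (as a set) the chains over real points. A Galois-cohomology computation — the kind of bookkeeping on permutation modules where $H^2(G,-)$ picks out the $\sigma$-fixed part modulo norms — reduces the statement to a fiber-by-fiber verification over the real points of $\mathbb{C}$. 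For a single degenerate fiber over a real point, the fact that its multiplicity is odd is exactly what forces the relevant local contribution of $H^2(j_*)$ to be an isomorphism rather than trivial: an even multiplicity would, as in Example~\ref{Ex:conic-counterexample}, kill the map. I expect this local computation — carefully matching the $\mathbb{Z}_2$-grading on the group generated by the components of one real degenerate fiber with its image in $\mathrm{Cl}(V)$ modulo norms, and seeing the parity of the multiplicity appear — to be the main obstacle.

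For the converse $(1)\Rightarrow(2)$: if $S$ is a $\mathbb{Q}$-acyclic real plane of negative Kodaira dimension, then by \cite{MS80} $S$ carries some $\mathbb{A}^1$-fibration, though a priori not a real one. The strategy is to produce a real one by averaging: pass to a real completion $(V,B)$ as above, and use the classification of $\mathbb{Q}$-acyclic surfaces with an $\mathbb{A}^1$-fibration to understand the possible boundary configurations; then show that the pencil defining the fibration can be chosen $\sigma$-invariant, giving a real morphism $\rho\colon S\to\mathbb{C}$ (the base is $\mathbb{A}^1$ and not a non-trivial real form because $S(\mathbb{R})\neq\emptyset$ maps onto it). That the closed fibers are reduced-isomorphic to $\mathbb{C}$ follows from $\mathbb{Q}$-acyclicity together with the structure of the boundary tree. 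Finally, the odd-multiplicity statement over real points is forced by the already-established direction (b) of Theorem~\ref{thm:Top-carac}, read backwards: if some fiber over a real point had even multiplicity, the same local Galois-cohomology contribution would obstruct $H^2(j_*)$ from being an isomorphism, contradicting that $S$ is a real plane. This last step again leans on the fiber-wise parity computation, so the real content of the theorem is concentrated there; the remainder is a combination of the complex structure theory of $\mathbb{A}^1$-fibered $\mathbb{Q}$-acyclic surfaces with the criterion of Theorem~\ref{thm:Top-carac}.
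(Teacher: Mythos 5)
This theorem is quoted in the present paper from \cite[Theorem 4.1]{DM15} without proof, so there is no in-paper argument to compare with; judged on its own terms, your text is an outline whose decisive steps are missing rather than a proof. In the direction (2)$\Rightarrow$(1), everything up to condition b) of Theorem~\ref{thm:Top-carac} is indeed routine, but you yourself defer the fibre-by-fibre Galois cohomology computation over the real points of the base --- the only place where the odd-multiplicity hypothesis enters --- calling it ``the main obstacle''. That computation \emph{is} the content of the theorem, not a verification one may postpone: one has to make the $G$-module structures on $\mathbb{Z}\langle B\rangle$ and $\mathrm{Cl}(V)$ explicit for a completion adapted to the extension $\bar{\rho}\colon V\to\mathbb{CP}^{1}$, use for each degenerate fibre over a real point $c$ the relation $f\sim m_{c}\overline{F}_{c}+\sum a_{i}E_{i}$ in $\mathrm{Cl}(V)$, where $\overline{F}_{c}$ is the closure of the affine fibre --- which, contrary to your description of $B$, is \emph{not} a component of $B$; only the extra components of $\bar{\rho}^{-1}(c)$ and the fibre over infinity lie in $B$ --- and then see how the parity of $m_{c}$ decides whether $H^{2}(j_{*})$ is an isomorphism. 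Some such argument (or a direct topological analysis of $S(\mathbb{R})$ near a multiple fibre: e.g.\ for $x^{2}z=y^{2}+x$ with the fibration $\mathrm{pr}_{x}$, which has a unique multiple fibre of even multiplicity over a real point, the real locus is disconnected) must actually be carried out; it is also needed in (1)$\Rightarrow$(2), where you want to read it ``backwards'' and must in addition exclude that contributions of several even-multiplicity real fibres compensate one another inside the $\mathbb{Z}_{2}$-vector spaces $H^{2}(G,-)$.

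The other genuine gap is the production of a \emph{real} $\mathbb{A}^{1}$-fibration in (1)$\Rightarrow$(2). ``Averaging'' is not an argument here: an $\mathbb{A}^{1}$-fibration on a $\mathbb{Q}$-acyclic surface of negative Kodaira dimension need not be unique up to automorphisms of the base (think of $\mathbb{C}^{2}$, or more generally of surfaces with many such fibrations), so $\sigma$-invariance of ``the'' pencil is not automatic and the non-unique case must be handled separately; and even when the pencil is $\sigma$-invariant one only gets $\rho\circ\sigma=\phi\circ\overline{\rho}$ for some automorphism $\phi$ of $\mathbb{C}$, so one still has to untwist, i.e.\ check that the induced antiholomorphic involution on the base is conjugate to the standard one (using for instance $S(\mathbb{R})\neq\emptyset$) before one may claim a real morphism $\rho\colon S\to\mathbb{C}$; the irreducibility and reducedness statements for the fibres then follow from $\mathbb{Q}$-acyclicity as you say. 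In short, your skeleton is compatible with the criterion of Theorem~\ref{thm:Top-carac} used throughout the paper, but the two points that carry the actual content of Theorem~\ref{thm:RealPlan-negKappa-Carac} --- the parity computation and the existence of a genuinely real fibration --- are not proved.
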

\begin{example}
\label{ex:Fib-hypersurfaces-negKappa} 
Let $s\geq1$, let $[m]=(m_{1},\ldots,m_{s})$
be a collection of integers $m_{i}\geq2$, let $[p]=(p_{1},\ldots,p_{s})$
be a collection of odd integers $p_{i}\geq3$ and let $r_{1},\ldots,r_{s}\in\mathbb{R}$
be a collection of pairwise distinct real numbers. Then the real surface
$S_{s,[m][p]}\subset\mathbb{C}^{3}$ defined by the equation 
\[
\prod_{i=1}^{s}(x-r_{i})^{m_{i}}z=\sum_{i=1}^{s}\prod_{j\neq i}(x-r_{j})y^{p_{i}}+\prod_{i=1}^{s}(x-r_{i})
\]
is a smooth $\mathbb{Q}$-acyclic fake plane of negative Kodaira dimension.
Indeed, the smoothness of $S_{s,[m][p]}$ follows for instance from
the Jacobian criterion. The restriction to $S_{s,[m][p]}$ of the
projection $\mathrm{pr}_{x}$ is a real $\mathbb{A}^{1}$-fibration
$\pi\colon S_{s,[m][p]}\rightarrow\mathbb{C}$ with irreducible fibers,
restricting to a trivial bundle over $\mathbb{C}\setminus\{r_{1},\ldots,r_{s}\}$
and whose fiber over each of the points $r_{i}$, $i=1,\ldots,s$,
has odd multiplicity $p_{i}$. So $S_{s,[m][p]}$ is a $\mathbb{Q}$-acyclic
real plane of negative Kodaira dimension by the previous theorem. 
\end{example}
The following partial result concerning the classification of $\mathbb{Q}$-acyclic
real plane of negative Kodaira dimension up to $\mathbb{R}$-biregular
equivalence was obtained in \cite{DM15}:

\begin{thm}
\label{thm:Real-plane-NegKappa-Rectif} Every $\mathbb{Q}$-acyclic
real plane $S$ of negative Kodaira dimension admitting a real $\mathbb{A}^{1}$-fibration
$\rho\colon S\rightarrow\mathbb{C}$ with at most one degenerate fiber is
$\mathbb{R}$-biregularly birationally equivalent to $\mathbb{C}^{2}$. 
\end{thm}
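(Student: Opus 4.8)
The plan is to reduce the statement to a normal-form description of the $\mathbb{A}^1$-fibration $\rho\colon S\to\mathbb{C}$ and then to perform an explicit real birational modification that flattens the single degenerate fiber. By Theorem~\ref{thm:RealPlan-negKappa-Carac}, a $\mathbb{Q}$-acyclic real plane of negative Kodaira dimension is exactly a smooth affine real surface carrying a real $\mathbb{A}^1$-fibration $\rho\colon S\to\mathbb{C}$ whose closed fibers are all reduced-isomorphic to $\mathbb{C}$ and whose fibers over real points have odd multiplicity. If $\rho$ has no degenerate fiber at all, then $\rho$ is a (real) line bundle over $\mathbb{A}^1$, hence $S$ is real-isomorphic to $\mathbb{C}^2$ and there is nothing to prove; so I may assume there is exactly one degenerate fiber, and after a real translation on the base it lies over $0\in\mathbb{C}$ and has some odd multiplicity $p\geq 3$ (the case of a multiplicity-one degenerate fiber being again trivial).

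First I would set up a real SNC completion $S\hookrightarrow(V,B)$ on which $\rho$ extends to a $\mathbb{P}^1$-fibration $\bar\rho\colon V\to\mathbb{P}^1$, and analyze the fiber $\bar\rho^{-1}(0)$: since the unique degenerate fiber of $\rho$ is irreducible (reduced structure $\cong\mathbb{C}$) with multiplicity $p$, the full fiber of $\bar\rho$ over $0$, together with the horizontal and at-infinity components of $B$, forms a specific real tree of rational curves, and one reads off from the structure of degenerate fibers of $\mathbb{P}^1$-fibrations obtained by blowing up from a Hirzebruch surface exactly which elementary transformations produced it. The key point is that, because $S$ is a real plane, the combinatorics of $B$ over $0$ are rigid: $B$ is a real chain/tree of rational curves, and the real structure has nonempty real locus only on a controlled set of components. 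I would then exhibit an explicit sequence of \emph{real} blow-ups and blow-downs — elementary transformations over a neighborhood of $0$ in the base — whose centers all lie off $V(\mathbb{R})$, transforming $(V,B)$ into $(V',B')$ with $S'=V'\setminus B'\cong\mathbb{C}^2$. Concretely this amounts to ``unwinding'' the multiple fiber by repeatedly blowing up the conjugate pair of non-real intersection points created along the tower, exactly as in the construction reversing Example~\ref{ex:Fib-hypersurfaces-negKappa} with $s=1$.

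The crucial verification is that this birational map $\psi\colon S\dashrightarrow\mathbb{C}^2$ is a birational diffeomorphism in the sense of Definition~\ref{defn.birdif}, i.e. that the indeterminacy loci of $\psi$ and $\psi^{-1}$ avoid the real loci. This follows from the fact that every center blown up or contracted in the process is either a non-real point (a conjugate pair) or a component of $B$ (resp. $B'$) disjoint from the real locus $S(\mathbb{R})$ (resp. $S'(\mathbb{R})$): the only place where the real locus of the boundary could meet the fiber over $0$ is along the closure of the reduced degenerate fiber, which maps isomorphically under $\rho$ onto $\mathbb{C}$, and since $p$ is odd this closure meets $B$ at a single real point that is preserved, not blown up. Thus $\psi$ restricts to a diffeomorphism $S(\mathbb{R})\xrightarrow{\sim}\mathbb{R}^2$, giving the $\mathbb{R}$-biregular birational equivalence.

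The main obstacle I anticipate is purely combinatorial bookkeeping: pinning down the exact dual graph of $B$ over the degenerate fiber for an arbitrary odd multiplicity $p$ and verifying that the Galois-cohomological condition (b) of Theorem~\ref{thm:Top-carac}, which is automatically inherited through the construction, is compatible with keeping all blow-up centers non-real throughout the whole tower. In other words, the delicate step is not the existence of \emph{some} birational map to $\mathbb{C}^2$ (negativity of $\kappa$ and $\mathbb{Q}$-acyclicity already force $S$ to be rational), but the control of the real structure at every intermediate surface so that no indeterminacy point ever lands on a real point; handling this uniformly in $p$ (and in the multiplicity data of the possibly non-reduced fiber) is where the real work lies, and it is presumably where the hypothesis ``at most one degenerate fiber'' is essential — with two or more degenerate fibers the elementary transformations interact and one can no longer keep all centers off the real locus.
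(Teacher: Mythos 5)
This theorem is not proved in the present paper at all: it is quoted from \cite{DM15}, and the only material here that plays the role of a proof are the worked examples of \S 2.2 (the cuspidal cubic complement and the $E_6$-cubic surface, both with a unique multiple fiber of multiplicity $3$), which carry out explicitly the kind of rectification the theorem asserts. Measured against that, your proposal identifies the right framework — complete $\rho$ to a real $\mathbb{P}^1$-fibration on a pair $(V,B)$, and build a real birational map to $\mathbb{C}^2$ using only modifications whose indeterminacy avoids the real loci (blow-ups of conjugate pairs of non-real points, contractions of conjugate pairs of non-real curves, and arbitrary blow-ups/contractions supported in the boundary $B$) — but it never actually produces such a sequence. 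The entire content of the theorem is precisely the existence of this sequence for an arbitrary odd multiplicity $p$ and an arbitrary boundary chain over the degenerate fiber, and your text replaces it by ``exhibit an explicit sequence \dots exactly as in the construction reversing Example~\ref{ex:Fib-hypersurfaces-negKappa} with $s=1$'', then concedes in the last paragraph that the uniform control of the real structure along the tower ``is where the real work lies''. That is the gap: a strategy statement plus an acknowledged unperformed verification is not a proof, and nothing in the sketch rules out the obvious danger, namely that at some stage the only contractible $(-1)$-curve in the fiber over $0$ is the closure of the reduced degenerate fiber itself, which meets $S(\mathbb{R})$ and therefore must never be contracted; getting around this is exactly the nontrivial construction (in the paper's multiplicity-$3$ examples it is done by contracting auxiliary conjugate non-real \emph{sections} $\ell_i,\overline{\ell}_i$, not by fiberwise elementary transformations).

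Two further inaccuracies are worth flagging. First, your requirement that all centers ``lie off $V(\mathbb{R})$'' is both too strong and inconsistent with your own later sentence and with the paper's examples: there one blows up a \emph{real} boundary point ($E_1\cap E_3$) and contracts \emph{real} boundary curves ($B$, $E_3$, $E_2$); what matters is only that every center and every contracted curve meeting $S$ (resp.\ $S'$) is non-real, while real centers are freely allowed inside the boundary. Second, the justification ``since $p$ is odd this closure meets $B$ at a single real point'' is a non sequitur: the closure of the reduced degenerate fiber meets the boundary tree in one point irrespective of parity, and the oddness of $p$ is not used anywhere in your argument beyond being part of the hypothesis that $S$ is a real plane — so your sketch gives no indication of where (or whether) the parity hypothesis enters the rectification itself. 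Finally, the degenerate-fiber-free case should be closed by invoking Kambayashi's theorem (cited in the paper) that every real structure on $\mathbb{C}^2$ is standard, rather than by the bare assertion that the $\mathbb{A}^1$-bundle is a trivial real line bundle.
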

As a consequence, the real locus of every surface $S_{1,m_{1},p_{1}}$
as in Example \ref{ex:Fib-hypersurfaces-negKappa} is birationally
diffeomorphic to $\mathbb{R}^{2}$. We don't know whether $\mathbb{Q}$-acyclic
real plane of negative Kodaira dimension $S$ admitting real $\mathbb{A}^{1}$-fibrations
$\rho\colon S\rightarrow\mathbb{C}$ with more than one degenerate fiber
are $\mathbb{R}$-biregularly birationally equivalent to $\mathbb{C}^{2}$,
in particular:
\begin{question}
Is the real locus of a surface $S_{s,[m][p]}$ with $s\geq2$ as in
Example \ref{ex:Fib-hypersurfaces-negKappa} birationally diffeomorphic
to $\mathbb{R}^{2}$ ? 
\end{question}

\subsection{Two examples of $\mathbb{Q}$-acyclic fake planes with real loci
birationally diffeomorphic to $\mathbb{R}^{2}$ }

\subsubsection{Construction and first properties}

\indent\newline\indent 1) First we let $S$ be the complement of
a real cuspidal cubic $B$ in $V=\mathbb{CP}^{2}$. Since $B$ is
equivalent in $\mathrm{Cl}(V)$ to three times the class of a real
line $\ell\subset\mathbb{CP}^{2}$, the $\mathbb{Q}$-acyclicity of
$S$ immediately follows from Theorem \ref{thm:Top-carac} a) applied
to the pair $(V,B)$, and we have $H_{1}(S;\mathbb{Z})\simeq\mathbb{Z}_{3}$
by Remark \ref{Rk:Top-carac-rem}.2. The fact that $S$ is a real
plane follows from b) in the same theorem after noting that similarly
as in Example \ref{Ex:conic-counterexample}, $H^{2}(G,\mathbb{Z}\langle B\rangle)\simeq\mathbb{Z}\langle B\rangle\otimes_{\mathbb{Z}}\mathbb{Z}_{2}\simeq\mathbb{Z}_{2}\cdot[B]$
and $H^{2}(G,\mathrm{Cl}(\mathbb{CP}^{2}))\simeq\mathrm{Cl}(\mathbb{CP}^{2})\otimes_{\mathbb{Z}}\mathbb{Z}_{2}\simeq\mathbb{Z}_{2}\cdot[\ell]$.
Alternatively, one can observe that $B(\mathbb{R})$ is homeomorphic
to a simple closed curve in $\mathbb{RP}^{2}$ whose homotopy class
is a generator of $\pi_{1}(\mathbb{RP}^{2})$, so $\mathbb{RP}^{2}\setminus B(\mathbb{R})$
is homeomorphic to an open disc, hence is diffeomorphic to $\mathbb{R}^{2}$.
The restriction to $S$ of the rational pencil $V\dashrightarrow\mathbb{CP}^{1}$
generated by $B$ and three times its tangent $T$ at its unique singular
point restricts to a real $\mathbb{A}^{1}$-fibration $\rho\colon S\rightarrow\mathbb{C}$
with a unique degenerate fiber of multiplicity $3$ consisting of
the intersection of $T$ with $S$. The real minimal resolution of
the pencil $V\dashrightarrow\mathbb{CP}^{1}$ dominates the real minimal
resolution $\tau\colon W\rightarrow V$ of the pair $(V,B)$, and is obtained
from it by blowing-up three times the intersection point of the proper
transform of $B$ with the successive total transforms of the exceptional
locus of $\tau$. We denote by $\beta\colon \tilde{V}\rightarrow V$ the
so constructed surface and we denote by $C$ the last exceptional
divisor produced by this sequence of blow-ups. The dual graph of the
total transform of $B\cup T$ in $\tilde{V}$ is depicted in Figure~\ref{fig:cusp-cubic-init} below. 

\begin{figure}[!htb]
\input{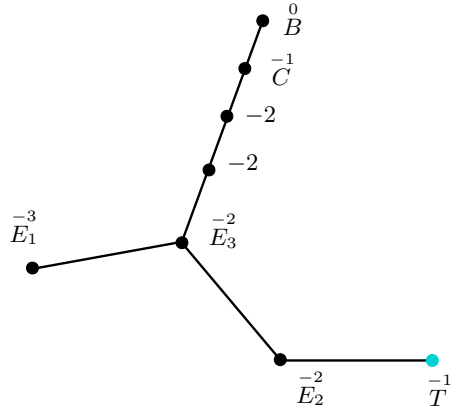} 
\caption{Dual graph of the
proper transform of $B\cup T$ in $\tilde{V}$.}  
\label{fig:cusp-cubic-init}
\end{figure}

2) Next we let $S'$ be the smooth real affine cubic surface in $\mathbb{C}^{3}$
defined by the equation $x^{2}z=y^{3}-x$. Taking projective closure in $\mathbb{C}\mathbb{P}^{3}$ 
with homogeneous coordinates $[x:y:z:t]$, we view $S'$ as the complement in the normal real cubic surface $V'=\{x^{2}z-y^{3}+xt^{2}=0\}$ 
of the real rational cuspidal hyperplane section $B'=V'\cap\{t=0\}$. Note that $V'$ has a unique singular
point of type $D_{4}$ at $[0:0:1:0]$. Since the map $\mathbb{R}^{2}\rightarrow S'$,
$(x,z)\mapsto(x,\sqrt[3]{x^{2}z+x},z)$ is an homeomorphism between
$\mathbb{R}^{2}$ and the real locus of $S'$, it follows that $S'$
is a real plane. The divisor class group $\mathrm{Cl}(V')$ of $V'$
is isomorphic to $\mathbb{Z}$ generated by the closure $T'$ in $V'$
of the real line $\{x=y=0\}\subset S'$. The divisor class group $\mathrm{Cl}(W')$
of the real minimal log-resolution $\tau'\colon W'\rightarrow V'$ of the
pair $(V',B')$ is isomorphic to $\mathbb{Z}^{6}$ generated by the five exceptional
divisors of $\tau'$ and the proper transform of $T'$, and since
$B'\sim3T'$ in $\mathrm{Cl}(V')$, it follows that for an appropriate
choice of bases, the map $j_{*}\colon \mathbb{Z}\langle\tau^{-1}(B')\rangle\rightarrow\mathrm{Cl}(W')$
is represented by a matrix $M$ of the form 
\[
M=\left(\begin{array}{cc}
\mathrm{id}_{5} & *\\
0 & 3
\end{array}\right).
\]
So by virtue of Theorem \ref{thm:Top-carac} a) applied to the real
projective completion $(W',\tau^{-1}(B'))$ of $S'$, $S'$ is $\mathbb{Q}$-acyclic,
with $H_{1}(S';\mathbb{Z})\simeq\mathbb{Z}_{3}$ by Remark \ref{Rk:Top-carac-rem}.2. The restriction to $S'$ of the projection $\mathrm{pr}_{x}$
is a real $\mathbb{A}^{1}$-fibration $\rho'\colon S'\rightarrow\mathbb{C}$
having the line $\{x=y=0\}$ as a unique degenerate fiber of multiplicity
$3$. So $\kappa(S')=-\infty$. The real minimal resolution of the
pencil $V'\dashrightarrow\mathbb{CP}^{1}$ induced by $\rho'\colon S'\rightarrow\mathbb{C}$
dominates the real minimal resolution $\tau'\colon W'\rightarrow V'$ of
the pair $(V',B')$, and is obtained from it by blowing-up the intersection
point of the proper transform of $B'$ with the exceptional locus
of $\tau'$. We denote by $\beta'\colon \tilde{V}'\rightarrow V$ the so
constructed surface and we denote by $C'$ the last exceptional divisor
produced by this sequence of blow-ups. The dual graph of the total
transform of $B'\cup T'$ in $\tilde{V}'$ is depicted in Figure~\ref{fig:E6-cubic-init} below. 

\begin{figure}[!htb]
\input{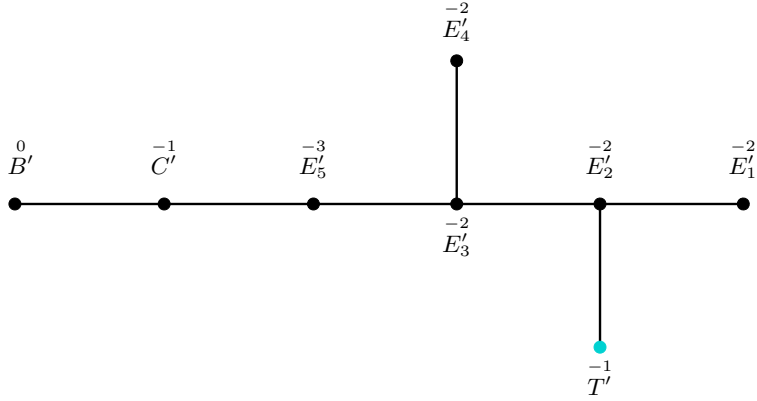} 
\caption{Dual graph of the
proper transform of $B'\cup T'$ in $\tilde{V}$.}  
\label{fig:E6-cubic-init}
\end{figure}

\begin{prop}
The surfaces $S$ and $S'$ are non isomorphic fake planes of negative
Kodaira dimension.
\end{prop}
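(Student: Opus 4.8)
The plan has two steps, the first of which is essentially contained in the construction above. Both $S$ and $S'$ are $\mathbb{Q}$-acyclic real planes carrying real $\mathbb{A}^{1}$-fibrations $\rho\colon S\to\mathbb{C}$ and $\rho'\colon S'\to\mathbb{C}$, so $\kappa(S)=\kappa(S')=-\infty$; and since $H_{1}(S;\mathbb{Z})\simeq H_{1}(S';\mathbb{Z})\simeq\mathbb{Z}_{3}$ is non-trivial, neither surface is $\mathbb{Z}$-acyclic, hence neither is biregularly isomorphic to $\mathbb{C}^{2}$, over $\mathbb{C}$ and a fortiori over $\mathbb{R}$; so both are fake. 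All that remains is to prove that $S$ and $S'$ are not isomorphic, and for this it suffices to work over $\mathbb{C}$.

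For the non-isomorphism, I would first observe that the two surfaces agree on every obvious invariant: both are $\mathbb{Q}$-acyclic with $H_{1}\simeq\mathbb{Z}_{3}$, both have complex fundamental group $\mathbb{Z}_{3}$ (the orbifold fundamental group of $\mathbb{C}$ with one multiple point of order $3$), and each of their $\mathbb{A}^{1}$-fibrations has a single degenerate fiber, of multiplicity $3$ and with reduced structure $\mathbb{C}$. So the distinction has to be read off at infinity, which is exactly the content of Figures~\ref{fig:cusp-cubic-init} and \ref{fig:E6-cubic-init}. The mechanism I would use is that on a smooth $\mathbb{Q}$-acyclic affine surface of negative Kodaira dimension not isomorphic to $\mathbb{C}^{2}$, the $\mathbb{A}^{1}$-fibration is unique up to automorphisms of the base (part of the structure theory behind \cite{DM15}; the affine surfaces admitting inequivalent $\mathbb{A}^{1}$-fibrations are $\mathbb{C}^{2}$ and $\mathbb{C}^{*}\times\mathbb{C}$, and the latter is not $\mathbb{Q}$-acyclic). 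Consequently any isomorphism $\varphi\colon S\to S'$ would intertwine $\rho$ and $\rho'$ up to an automorphism of $\mathbb{C}$; completing bases and general fibers to $\mathbb{CP}^{1}$ and passing to the minimal resolutions $\tilde{V}$ and $\tilde{V}'$ of the induced pencils, $\varphi$ would extend to an isomorphism $\tilde{V}\xrightarrow{\sim}\tilde{V}'$ taking boundary divisor to boundary divisor, fibers of $\tilde{\rho}$ to fibers of $\tilde{\rho}'$, and in particular the fiber through $T$ to the fiber through $T'$. I would then finish by checking by hand that these two marked weighted trees (the weighted dual graphs of Figures~\ref{fig:cusp-cubic-init} and \ref{fig:E6-cubic-init}) are not isomorphic: in $\tilde{V}$ the fiber through $T$ is the divisor $3T+3E_{2}+3E_{3}+2G_{1}+G_{2}+E_{1}$, where $G_{1}$ and $G_{2}$ are the two unlabelled $(-2)$-curves with $G_{1}$ adjacent to the trivalent component $E_{3}$, so that its support is a tree with a single branch vertex, whereas the support of the analogous fiber in $\tilde{V}'$ has two. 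No isomorphism of pairs can match them, so no such $\varphi$ exists.

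The step that will require care, and that I expect to be the main obstacle, is the claim that $\varphi$ extends to an isomorphism of the completions compatibly with boundary, $\mathbb{CP}^{1}$-fibration and section at infinity: the usual non-uniqueness of SNC completions under elementary transformations must be controlled. I would deal with this by anchoring the completion to the \emph{unique} $\mathbb{A}^{1}$-fibration, so that the minimal resolution of the associated pencil, together with its $\mathbb{CP}^{1}$-fibration and its one-section $C$ (resp.\ $C'$), becomes intrinsic to $S$ (resp.\ $S'$). Granting this and the uniqueness of the $\mathbb{A}^{1}$-fibration, what is left is the purely combinatorial verification that the weighted trees of Figures~\ref{fig:cusp-cubic-init} and \ref{fig:E6-cubic-init}, or at least the subtrees supporting their multiple fibers, are genuinely non-isomorphic and not merely related by admissible blow-ups and blow-downs at free points of the boundary.
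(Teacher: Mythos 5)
Your overall architecture (uniqueness of the $\mathbb{A}^{1}$-fibration, extension of a hypothetical isomorphism to the pencil resolutions, then a combinatorial contradiction at infinity) is the same as the paper's, and your computation of the multiple fiber $3T+3E_{2}+3E_{3}+2G_{1}+G_{2}+E_{1}$ and of the number of branch vertices in the two fibers is correct and would indeed close the argument. But two of your supporting claims are exactly the delicate points, and as stated they are not right.

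First, your justification of the uniqueness of the $\mathbb{A}^{1}$-fibration is a false lemma: it is not true that the only affine surfaces carrying inequivalent $\mathbb{A}^{1}$-fibrations are $\mathbb{C}^{2}$ and $\mathbb{C}^{*}\times\mathbb{C}$, nor even that a $\mathbb{Q}$-acyclic surface of negative Kodaira dimension other than $\mathbb{C}^{2}$ has a unique one. Gizatullin surfaces with trivial Makar-Limanov invariant carry many inequivalent $\mathbb{A}^{1}$-fibrations, and some of them are $\mathbb{Q}$-homology planes; the complement of a smooth conic in $\mathbb{CP}^{2}$ (Example~\ref{Ex:conic-counterexample} of this very paper) is such a surface. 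What saves the day for $S$ and $S'$ is Bertin's criterion \cite{Be83}: the dual graphs of the boundaries in the minimal resolutions of $(V,B)$ and $(V',B')$ are \emph{not chains}, hence the fibrations $\rho$ and $\rho'$ are unique up to automorphisms of $\mathbb{C}$. You need this (or an equivalent argument), not the classification you invoke.

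Second, the extension step is not merely a point "requiring care": an isomorphism $\Phi\colon S\to S'$ does \emph{not} in general extend to an isomorphism $\tilde{V}\to\tilde{V}'$, and "anchoring the completion to the unique fibration" does not make the pencil resolution intrinsic, precisely because of elementary transformations along the section at infinity. The correct statement (used in the paper, via \cite{BD11}) is that $\Phi$ extends to a fibered birational modification $\overline{\Phi}\colon\tilde{V}\dashrightarrow\tilde{V}'$ with a unique proper base point at $C\cap B$, restricting to an isomorphism between $\tilde{V}\setminus\beta_{*}^{-1}(B)$ and $\tilde{V}'\setminus(\beta')_{*}^{-1}(B')$ and sending $C$ onto $C'$. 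This weaker statement already suffices: since the map is biregular away from the proper transforms of $B$ and $B'$, it must match the boundary components meeting $C$ and $C'$ (self-intersection $-2$ versus $-3$ in Figures~\ref{fig:cusp-cubic-init} and~\ref{fig:E6-cubic-init}), or equivalently the two multiple-fiber trees with one versus two branch vertices, which is impossible. Your last sentence acknowledges that one must rule out such blow-ups and blow-downs along the boundary, but the proposal neither proves the extension statement nor carries out that exclusion, so as written the argument is incomplete at exactly the step where the paper's proof does the work.
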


\begin{proof}
That $S$ and $S'$ are fake planes of negative Kodaira dimension
follows from the construction above. The fact they are non isomorphic
as complex surfaces can be deduced as follows. First since the dual
graphs of the minimal resolutions of the pairs $(V,B)$ and $(V',B')$
are not chains, it follows from \cite{Be83} that the $\mathbb{A}^{1}$-fibrations
$\rho\colon S\rightarrow\mathbb{C}$ and $\rho'\colon S'\rightarrow\mathbb{C}$
constructed above are unique, up to composition by automorphisms of
the base $\mathbb{C}$. This implies in particular that any
isomorphism $\Phi$ between $S$ and $S'$ is an isomorphism of fibered
surfaces, that is, there exists an isomorphism $\phi\colon \mathbb{C}\stackrel{\sim}{\rightarrow}\mathbb{C}$
such that $\rho'\circ\Phi=\phi\circ\rho$. Every such isomorphism
then admits a unique extension to a birational map $\overline{\Phi}\colon \tilde{V}\dashrightarrow\tilde{V}'$
with a unique proper base point, supported at the intersection of
$C$ with the proper transform of $B$, restricting to an isomorphism
between the complements $\tilde{V}\setminus\beta_{*}^{-1}(B)$ and
$\tilde{V}'\setminus(\beta')_{*}^{-1}(B')$ of the proper transforms
of $B$ and $B'$ respectively and mapping $C$ isomorphically onto
$C'$ (see e.g. \cite{BD11} where these birational maps are called
fibered modifications). But the fact that the irreducible components
of $\beta^{-1}(B)\setminus\beta_{*}^{-1}(B)$ and $\beta^{-1}(B')\setminus\beta_{*}^{-1}(B')$
intersecting $C$ and $C'$ respectively have different self-intersections
prevents the existence of any such birational map. So $S$ and $S'$
are not isomorphic. \end{proof}
\begin{rem}
The surfaces $S$ and $S'$ do not only have the same homology group
but also the same fundamental group $\pi_{1}\simeq\mathbb{Z}_{3}$.
In fact, it can be shown that the complex algebraic threefolds $S\times\mathbb{A}^{1}$
and $S'\times\mathbb{A}^{1}$ are isomorphic, so that $S$ and $S'$
are in particular homotopically equivalent. A direct computation
reveals further that the fundamental groups at infinity of $S$ and
$S'$ (see e.g. \cite[§ 4.9 p. 246]{MiyBook} for the definition and
an algorithm for the computation of these groups) are both isomorphic
to $\mathbb{Z}_{9}$. We do not know whether $S$ and $S'$ are homeomorphic
as real $4$-manifolds or not. 
\end{rem}

\subsubsection{Birational diffeomorphisms }\label{sec:DiffBir}

Here we show that the real loci of $S$ and $S'$ are not only diffeomorphic
to $\mathbb{R}^{2}$ but actually birationally diffeomorphic to it. 

1) In the minimal real log-resolution $\tau\colon W\rightarrow V$
of the pair $(V,B)$, the proper transform of any pair $(\ell,\overline{\ell})$ of general
non-real complex conjugate lines in $V=\mathbb{CP}^{2}$ passing through
the singular point $p_{0}$ of $B$ consists of a pair of complex
conjugate rational $0$-curves intersecting the proper transform of
$B$ transversally in a pair of non-real complex conjugate points
$(q,\overline{q})$. Choosing two distinct such general pairs $(\ell_{i},\overline{\ell}_{i})$,
$i=1,2$, we let $\theta_{1}\colon W\dashrightarrow W_{1}$ be the real
birational map consisting of the blow-up of the corresponding two
pairs of points $(q_{i},\overline{q}_{i})$, $i=1,2$, followed by
the contraction of the proper transforms of $\ell_{i}$ and $\overline{\ell}_{i}$,
$i=1,2$ (see Figure~\ref{fig:cusp-cubic-rectif} below). By construction, $\theta$ restricts to a diffeomorphism
$W(\mathbb{R})\approx W_{1}(\mathbb{R})$, the dual graph of the
proper transform of $\tau^{-1}(B)$ by $\theta_{1}$ is depicted in 
Figure~\ref{fig:cusp-cubic} below.

\begin{figure}[!htb]
\input{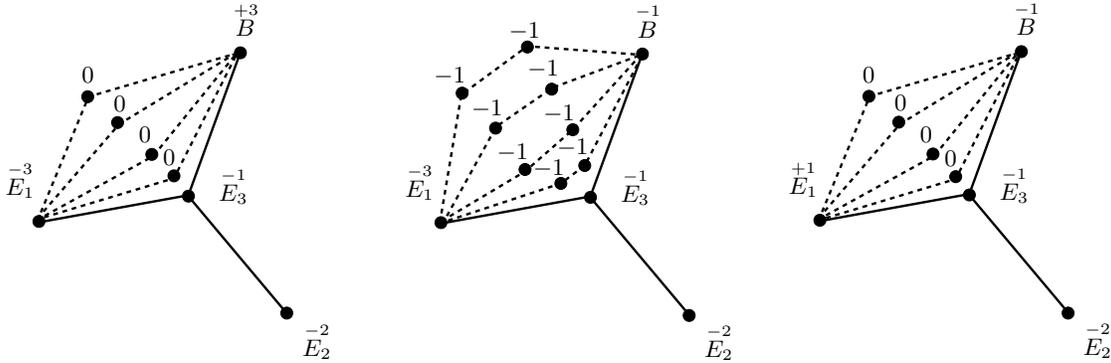} 
\caption{Elementary transformations.}  
\label{fig:cusp-cubic-rectif}
\end{figure}

\begin{figure}[!htb]
\input{cusp-cubic-f.tex} 
\caption{Dual graph of the
proper transform of $\tau^{-1}(B)$ by $\theta_{1}$.}  
\label{fig:cusp-cubic}
\end{figure}

Now let $\theta_{2}\colon W_{1}\dashrightarrow W_{2}$ be the real birational
map obtained by first blowing-up the point $E_{1}\cap E_{3}$, with
exceptional divisor $D_{1}$ and then contracting successively the proper
transforms of $B$, $E_{3}$ and $E_{2}$. 
By counting the number of points blown-up and the number of curves contracted during the whole process,
we infer that the Picard rank of $W_2$ is equal to $2$. So $W_2$ is a Hirzebruch surface in which 
the proper transforms of $D_{1}$ and $E_{1}$ are respectively 
a fiber and a section with self-intersection $1$ of a $\mathbb{P}^{1}$-bundle structure on $W_2$.
It follows that $W_2\simeq \mathbb{F}_{1}$ and that the proper transform of $D_1$ is a fiber of the unique 
$\mathbb{P}^{1}$-bundle structure $\pi_{1}\colon \mathbb{F}_{1}\rightarrow\mathbb{CP}^{1}$.
The restriction of $\theta_{2}\circ\theta_{1}$ to $S$ is a real
birational map $f\colon S\dashrightarrow\mathbb{F}_{1}\setminus(E_{1}\cup D_{1})\simeq\mathbb{C}^{2}$
inducing a diffeomorphism $S(\mathbb{R})\approx\mathbb{R}^{2}$.

2) For $S'$, the construction is very similar although of a slightly
different flavour. We first observe that the proper transform in the
real minimal resolution $\tau'\colon W'\rightarrow V'$ of the pair $(V',B')$
of a pair of general non-real complex conjugate hyperplane sections
$(\ell',\overline{\ell}')$ of $V'=\{x^{2}z-y^{3}+xt^{2}=0\}$ of
the form $V'\cap\{y-at=0\}$ and $V'\cap\{y-\overline{a}t=0\}$, $a\in \mathbb{C}\setminus \mathbb{R}$, consists
of a pair of complex conjugate rational $0$-curves intersecting the
proper transform of $B$ transversally in a pair of non-real complex
conjugate points $(q',\overline{q}')$. For every such pair $(\ell',\overline{\ell}')$,
the real birational map $\theta_{1}'\colon W'\dashrightarrow W_{1}'$ consisting
of the blow-up of $q'$ and $\overline{q}'$ followed followed by the
contraction of the proper transforms of $\ell'$ and $\overline{\ell}'$
restricts to a diffeomorphism $W'(\mathbb{R})\approx W_{1}'(\mathbb{R})$.
The dual graph of the proper transform of $(\tau')^{-1}(B')$ by $\theta_{1}'$
is depicted in Figure~\ref{fig:E6-cubic}.

\begin{figure}[!htb]
\input{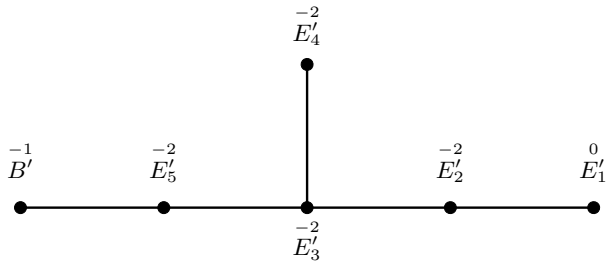} 
\caption{Dual graph of the
proper transform of $(\tau')^{-1}(B')$ by $\theta_{1}'$.}  
\label{fig:E6-cubic}
\end{figure}

Letting $\theta_{2}'\colon W_{1}'\rightarrow W_{2}'$ be the real birational
morphism consisting of the successive contractions of $B'$, $E_{5}'$,
$E_{3}'$ and $E_{4}'$, the resulting surface $W_{2}'$ is isomorphic
to $\mathbb{CP}^{1}\times\mathbb{CP}^{1}$ in which the proper transforms
of $E_{1}'$ and $E_{2}'$ are fibers say of the first and second
projection respectively. The restriction of $\theta_{2}'\circ\theta_{1}'$
to $S'$ is a real birational map $f'\colon S'\dashrightarrow\mathbb{CP}^{1}\times\mathbb{CP}^{1}\setminus(E_{1}'\cup E_{2}')\simeq\mathbb{C}^{2}$
inducing a diffeomorphism $S'(\mathbb{R})\approx\mathbb{R}^{2}$. 

\section{Fake planes of Kodaira dimension $0$ }

By virtue of \cite[Theorem 4.7.1 (1), p. 244]{MiyBook}, there is
no smooth complex $\mathbb{Z}$-acyclic surface of Kodaira dimension
$0$. Smooth complex $\mathbb{Q}$-acyclic surfaces of Kodaira dimension
$0$ do exist, and are completely classified after the work of Fujita
and Kojima (see also \cite[Chapter 3, §4]{MiyBook}). In this section,
we present certain families of fake planes of Kodaira dimension $0$,
some of which being $\mathbb{R}$-biregularly birationally equivalent
to $\mathbb{C}^{2}$.

\subsection{Real models of Fujita's $H[-k,k]$ surfaces}

In the Hirzebruch surface $\pi_{2p}\colon \mathbb{F}_{2p}\rightarrow\mathbb{CP}^{1}$,
$p\geq1$, with negative section $C_{0}\simeq\mathbb{CP}^{1}$ of
self-intersection $-2p$, we choose a real section $C_{1}\sim C_{0}+(2p+1)f$,
where $f$ is a general real fiber of $\pi_{2p}$, and a pair $(\ell,\overline{\ell})$
of non-real complex conjugate fibers of $\pi_{2p}$. Note that since
the unique intersection point of $C_{1}$ and $C_{0}$ is real, neither
$\ell$ nor $\overline{\ell}$ passes through it. Now we let $\tau\colon V\rightarrow\mathbb{F}_{2p}$
be the smooth real projective surface obtained from $\mathbb{F}_{2p}$
by first blowing-up the pair of non-real complex conjugate points
$q_{1}=C_{1}\cap\ell$ and $\overline{q}_{1}=C_{1}\cap\overline{\ell}$
with respective exceptional divisors $E$ and $\overline{E}$ and
then blowing-up the pair of non-real complex conjugate points $q_{1}'=\ell\cap E$
and $\overline{q}_{1}'=\overline{\ell}\cap\overline{E}$ with respective
exceptional divisors $F$ and $\overline{F}$. The dual graph of $B=C_{0}\cup C_{1}\cup\ell\cup\overline{\ell}\cup E\cup\overline{E}$,
where we identified each curve with its proper transform in $V$,
is a tree depicted in Figure~\ref{fig:hkk} below. 

\begin{figure}[!htb]
\input{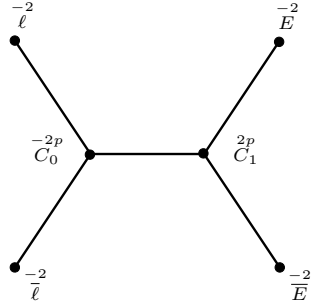} 
\caption{Dual graph of $B$.}  
\label{fig:hkk}
\end{figure}

 Since $B$ is a real curve, $H[-2p,2p]=V\setminus B$ is a smooth
real quasi-projective surface. The abelian group $\mathbb{Z}\langle B\rangle$
is freely generated by the classes of the irreducible curves $C_{0}$,
$C_{1}$, $\ell$, $\overline{\ell}$, $E$ and $\overline{E}$ while
the divisor class group $\mathrm{Cl}(V)$ of $V$ is freely generated
by $C_{0},f,E,\overline{E},F,\overline{F}$.
Using the relations 
\[
\begin{cases}
\tau^{*}C_{1}= & C_{1}+E+\overline{E}+F+\overline{F}\sim C_{0}+(2p+1)f\\
\tau^{*}\ell= & \ell+E+2F\sim f\\
\tau^{*}\overline{\ell}= & \overline{\ell}+\overline{E}+2\overline{F}\sim f
\end{cases}
\]
in $\mathrm{Cl}(V)$, we find that the homomorphism $j_{*}\colon \mathbb{Z}\langle B\rangle\rightarrow\mathrm{Cl}(V)$
induced by the inclusion $B\hookrightarrow V$ is represented by the
matrix 
\[
\left(\begin{array}{cccccc}
1 & 1 & 0 & 0 & 0 & 0\\
0 & 2p+1 & 1 & 1 & 0 & 0\\
0 & -1 & -1 & 0 & 1 & 0\\
0 & -1 & 0 & -1 & 0 & 1\\
0 & -1 & -2 & 0 & 0 & 0\\
0 & -1 & 0 & -2 & 0 & 0
\end{array}\right)
\]
It follows from Theorem \ref{thm:Top-carac} a) and Remark \ref{Rk:Top-carac-rem}
that $S=H[-2p,2p]$ is $\mathbb{Q}$-acyclic, with $H_{1}(S;\mathbb{Z})\simeq\mathbb{Z}_{8p}$.
On the other hand, the real locus of $B$ is homeomorphic to a wedge
of two circles, hence is in particular nonempty. The Galois cohomology
group $H^{2}(G,\mathbb{Z}\langle B\rangle)$ is freely generated
by the classes of $C_{0}$ and $C_{1}$ while $H^{2}(G,\mathrm{Cl}(V))$
is freely generated by the classes of $C_{0}$ and $f$. The relations
above imply that the matrix of the homomorphism $H^{2}(j_{*})\colon H^{2}(G,\mathbb{Z}\langle B\rangle)\rightarrow H^{2}(G,\mathrm{Cl}(V))$
with respect to these bases is the identity. So $H[-2p,2p]$ is a
real plane by virtue of Theorem \ref{thm:Top-carac} b). 
\begin{prop}
For every $p\geq1$, the surface \textup{$H[-2p,2p]$ is a $\mathbb{Q}$-acyclic
fake plane of Kodaira dimension $0$, $\mathbb{R}$-biregularly birationally
equivalent to $\mathbb{C}^{2}$. }\end{prop}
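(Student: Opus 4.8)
We have already checked above, by means of Theorem~\ref{thm:Top-carac}, that $S=H[-2p,2p]$ is a $\mathbb{Q}$-acyclic real plane. So the plan is to establish the remaining three assertions: that $\kappa(S)=0$, that $S$ is fake, and that $S$ is $\mathbb{R}$-biregularly birationally equivalent to $\mathbb{C}^{2}$. Fakeness will come for free: once $\kappa(S)=0$ we know that $S\not\cong\mathbb{C}^{2}$ already as a complex surface, since $\kappa(\mathbb{C}^{2})=-\infty$.

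For the Kodaira dimension, since the dual graph of $B$ is a tree of smooth rational curves (Figure~\ref{fig:hkk}), $(V,B)$ is an SNC completion of $S$, so that $\kappa(S)$ equals the Iitaka dimension of $K_{V}+B$. Using $K_{\mathbb{F}_{2p}}=-2C_{0}-(2p+2)f$, the relation $K_{V}=\tau^{*}K_{\mathbb{F}_{2p}}+E+2F+\overline{E}+2\overline{F}$ coming from the two pairs of blow-ups, and the three linear equivalences displayed just above, a short computation in $\mathrm{Cl}(V)$ gives $K_{V}+B\sim f-F-\overline{F}$; hence
\[
2(K_{V}+B)\sim 2f-2F-2\overline{F}\sim\ell+\overline{\ell}+E+\overline{E},
\]
the last equivalence because $\ell+E+2F$ and $\overline{\ell}+\overline{E}+2\overline{F}$ are the two degenerate fibres of the $\mathbb{P}^{1}$-fibration $V\rightarrow\mathbb{CP}^{1}$ obtained by composing $\tau$ with the ruling of $\mathbb{F}_{2p}$. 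The divisor $\ell+\overline{\ell}+E+\overline{E}$ is effective and its support is a disjoint union of four $(-2)$-curves, hence a negative definite configuration; so $\kappa(\ell+\overline{\ell}+E+\overline{E})=0$, and therefore $\kappa(S)=\kappa(K_{V}+B)=0$. Alternatively, the complex surface $H[-2p,2p]$ is one of Fujita's $\mathbb{Q}$-homology planes of Kodaira dimension $0$, see \cite[Chapter~3]{MiyBook}. In particular $S$ is fake.

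For the last assertion, the plan is to produce an explicit real birational map $S\dashrightarrow\mathbb{C}^{2}$ that restricts to an isomorphism between a Zariski neighbourhood of $S(\mathbb{R})$ and a Zariski neighbourhood of $\mathbb{R}^{2}$ in $\mathbb{C}^{2}$, in the spirit of the two constructions of Section~2. Concretely, starting from the completion $(V,B)$ one performs a finite chain of real birational modifications -- blow-ups of conjugate pairs of non-real points lying on $B$, and contractions of conjugate pairs of non-real curves. Carried out suitably, these first eliminate the curves $F,\overline{F}\subset S$ and the non-real boundary components $\ell,\overline{\ell},E,\overline{E}$, and then, by iterated elementary transformations of the resulting ruled surface, carry the real curves $C_{0}$ and $C_{1}$ to, respectively, a fibre and a section disjoint from the negative section of a Hirzebruch surface $\mathbb{F}_{n}$. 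Since the complement of a fibre and of such a section in $\mathbb{F}_{n}$ is isomorphic to $\mathbb{C}^{2}$, and since every centre blown up and every curve contracted along the way is non-real, the composed transformation is an isomorphism near the real loci; this yields the desired $\mathbb{R}$-biregular birational equivalence with $\mathbb{C}^{2}$, and in particular a birational diffeomorphism $S(\mathbb{R})\approx\mathbb{R}^{2}$. One may also deduce this from the general rectifiability result for $\mathbb{Q}$-acyclic real planes of Kodaira dimension $0$ carrying a real $\mathbb{C}^{*}$-fibration proved in \cite{DM15}, the relevant fibration on $S$ being the restriction of the ruling $\mathbb{F}_{2p}\rightarrow\mathbb{CP}^{1}$.

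The delicate point is the chain of elementary transformations: one has to choose the successive conjugate pairs of non-real centres so that, after finitely many steps, the boundary collapses to precisely a fibre plus a section disjoint from the negative section -- which amounts to bookkeeping of self-intersections along the chain -- and one must check that $C_{0}(\mathbb{R})$ and $C_{1}(\mathbb{R})$ are carried diffeomorphically onto the real loci of that fibre and section. Everything else ($\mathbb{Q}$-acyclicity, the real plane property, the value of the Kodaira dimension, and fakeness) is either already in place or a direct computation in $\mathrm{Cl}(V)$.
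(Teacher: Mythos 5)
Your treatment of everything except the last assertion is fine and essentially coincides with the paper: the $\mathbb{Q}$-acyclicity and real-plane property are quoted from the preceding discussion, your computation $K_{V}+B\sim f-F-\overline{F}$, $2(K_{V}+B)\sim\ell+\overline{\ell}+E+\overline{E}$ is exactly the paper's, the negative definiteness of that support gives $\kappa(S)=0$, and fakeness indeed follows since $\kappa(\mathbb{C}^{2})=-\infty$. The genuine gap is in the $\mathbb{R}$-biregular equivalence with $\mathbb{C}^{2}$, which is the actual content of the proposition beyond the preceding discussion, and which you only sketch. Your first step (contracting $F,\overline{F},E,\overline{E}$, i.e.\ descending along $\tau$, which is an isomorphism near the real loci because all centers are non-real) is correct and is also the paper's first step: it reduces to $S'=\mathbb{F}_{2p}\setminus(C_{0}\cup C_{1})$. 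But the second step, which you defer as ``bookkeeping of self-intersections along the chain,'' is where the work lies, and your proposed mechanism does not function as described: elementary transformations of the ruled surface preserve the ruling, and $C_{0},C_{1}$ are both sections, so no chain of them can turn $C_{0}$ into a fibre. More fundamentally, $H_{1}(S;\mathbb{Z})\simeq\mathbb{Z}_{8p}\neq 0$, so $S'\not\simeq\mathbb{C}^{2}$ as a complex surface; consequently no manipulation of the boundary alone can terminate with a pair (fibre, section) whose complement is $\mathbb{C}^{2}$ -- the birational map must genuinely modify the interior away from the real locus (concretely, it has to deal with the non-real part of the multiple fibre of multiplicity $2p+1$), and this is not a self-intersection count.

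The paper's missing key step is the following: $S'$ is a $\mathbb{Q}$-acyclic real plane of \emph{negative} Kodaira dimension, carrying the real $\mathbb{A}^{1}$-fibration $\rho\colon S'\to\mathbb{C}$ induced by the pencil generated by $C_{1}$ and $C_{0}+(2p+1)F_{0}$ (where $F_{0}$ is the fibre through the real point $C_{0}\cap C_{1}$), and this fibration has a \emph{unique} degenerate fibre, of multiplicity $2p+1$; Theorem~\ref{thm:Real-plane-NegKappa-Rectif} then yields the $\mathbb{R}$-biregular equivalence of $S'$ (hence of $S$) with $\mathbb{C}^{2}$. Nothing of this reduction to the negative Kodaira dimension case appears in your argument. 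Your fallback, a ``general rectifiability result for $\mathbb{Q}$-acyclic real planes of Kodaira dimension $0$ carrying a real $\mathbb{C}^{*}$-fibration proved in \cite{DM15}'', is not a result stated in this paper and, as far as the present text indicates, does not exist: \cite{DM15} is credited only with \emph{examples} of rectifiable fake planes in Kodaira dimension $0$, and the rectifiability of other Kodaira dimension $0$ surfaces such as $Y(3,3,3)$ is explicitly left open here. So as written, the rectifiability claim is unproved; to repair it, replace the elementary-transformation sketch by the reduction to Theorem~\ref{thm:Real-plane-NegKappa-Rectif} described above.
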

\begin{proof}
The fact that $S=H[-2p,2p]$ if a fake plane follows from the discussion
above. Since $K_{\mathbb{F}_{2p}}\sim-2C_{0}-(2p+2)f$, we deduce
from the ramification formula for $\tau\colon V\rightarrow\mathbb{F}_{2p}$
and the relations in $\mathrm{Cl}(V)$ indicated above that
\begin{eqnarray*}
K_{V}+B & \sim & -2C_{0}-(2p+2)f+E+\overline{E}+2F+2\overline{F}+(C_{0}+C_{1}+E+\overline{E}+\ell+\overline{\ell})\\
  & \sim & f-F-\overline{F},
\end{eqnarray*}
hence that
\[
2(K_{V}+B)\sim2(f-F-\overline{F})\sim(f-2F)+(f-2\overline{F})\sim\ell+E+\overline{\ell}+\overline{E}
\]
So the linear system $|2(K_{V}+B)|$ is nonempty, which implies that
$\kappa(S)\geq0$, and since the intersection matrix of $\ell+E+\overline{\ell}+\overline{E}$ 
is negative definite it follows that $\kappa(S)=0$. That $S$ is
$\mathbb{R}$-biregularly birationally equivalent to $\mathbb{C}^{2}$
can be seen as follows. First since the real morphism $\tau\colon V\rightarrow\mathbb{F}_{2p}$
consists of blow-ups of non-real points only, its restriction to $S$
is a birational morphism $S\rightarrow S'=\mathbb{F}_{2p}\setminus(C_{0}\cup C_{1})$
inducing a diffeomorphism $S(\mathbb{R})\approx S'(\mathbb{R})$.
The smooth real affine surface $S'=\mathbb{F}_{2p}\setminus(C_{0}\cup C_{1})$
admits a real $\mathbb{A}^{1}$-fibration $\rho\colon S'\rightarrow\mathbb{C}$
induced by the restriction of the pencil $\mathbb{F}_{2p}\dashrightarrow\mathbb{CP}^{1}$
generated by the linearly equivalent divisors $C_{1}$ and $C_{0}+(2p+1)F_{0}$,
where $F_{0}\simeq\mathbb{CP}^{1}$ denotes the fiber of $\pi_{2p}$
over the real point $\pi_{2p}(C_{0}\cap C_{1})\in\mathbb{CP}^{1}$.
This fibration has a unique degenerate fiber of multiplicity $2p+1$
consisting of the intersection of $F_{0}$ with $S'$, and so $S'$
is $\mathbb{R}$-biregularly equivalent to $\mathbb{C}^{2}$ by virtue
of Theorem \ref{thm:Real-plane-NegKappa-Rectif}. So $S(\mathbb{R})$
is birationally diffeomorphic to $\mathbb{R}^{2}$. 
\end{proof}

\subsection{The exceptional fake plane $Y(3,3,3)$ (see also \cite[\S\ 5.1.1]{DM15})}

Let $D$ be the union of four general real lines $\ell_{i}\simeq\mathbb{CP}^{1}$,
$i=0,1,2,3$ in $\mathbb{CP}^{2}$ and let $\tau\colon V\rightarrow\mathbb{CP}^{2}$
be the real projective surface obtained by first blowing-up the points
$p_{ij}=\ell_{i}\cap\ell_{j}$ with exceptional divisors $E_{ij}$,
$i,j=1,2,3$, $i\neq j$ and then blowing-up the points $\ell_{1}\cap E_{12}$,
$\ell_{2}\cap E_{23}$ and $\ell_{3}\cap E_{13}$ with respective
exceptional divisors $E_{1}$, $E_{2}$ and $E_{3}$. We let $B=\ell_{0}\cup\ell_{1}\cup\ell_{2}\cup\ell_{3}\cup E_{12}\cup E_{23}\cup E_{13}$.
The dual graphs of $D$, its total transform $\tau^{-1}(D)$ in $V$
and $B$ are depicted in Figure \ref{fig:logkod0-Y333}. 

\begin{figure}[!htb]
\input{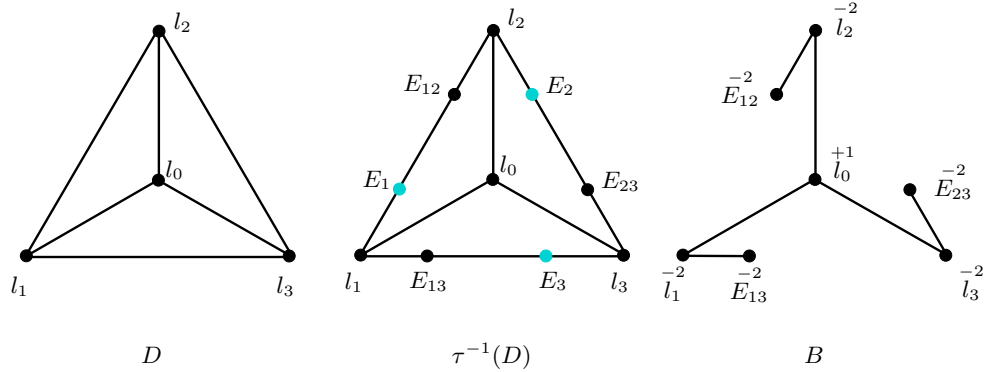} 
\caption{Construction of $Y(3,3,3)$}  
\label{fig:logkod0-Y333}
\end{figure}Since $B$ is a real curve, $Y(3,3,3):=V\setminus B$ is a smooth
real quasi-projective surface. The divisor class group $\mathrm{Cl}(V)$
of $V$ is freely generated by $\ell_{0}$, $E_{12}$, $E_{23}$,
$E_{13}$, $E_{1}$, $E_{2}$ and $E_{3}$ and using the relations
\[
\begin{cases}
\tau^{*}\ell_{1}= & \ell_{1}+E_{12}+E_{13}+2E_{1}+E_{3}\sim\ell_{0}\\
\tau^{*}\ell_{2}= & \ell_{2}+E_{12}+E_{23}+E_{1}+2E_{2}\sim\ell_{0}\\
\tau^{*}\ell_{3}= & \ell_{3}+E_{13}+E_{23}+E_{2}+2E_{3}\sim\ell_{0}
\end{cases}
\]
in $\mathrm{Cl}(V)$ together with the ramification formula for $\tau$,
we find that 
\[
3(K_{V}+B)\sim3\ell_{0}-3(E_{1}+E_{2}+E_{3})\sim(\ell_{1}+\ell_{2}+\ell_{3})+2(E_{12}+E_{13}+E_{23}).
\]
So $\kappa(Y(3,3,3))\geq0$ and since the intersection matrix of the divisor
on the right hand is negative definite, it follows that $\kappa(Y(3,3,3))=0$.
The abelian group $\mathbb{Z}\langle B\rangle$ is freely generated
by the classes of the irreducible curves $\ell_{i}$, $i=0,\ldots,3$,
$E_{12}$, $E_{23}$ and $E_{13}$ and using the above relations,
we find that the homomorphism $j_{*}\colon \mathbb{Z}\langle B\rangle\rightarrow\mathrm{Cl}(V)$
induced by the inclusion $B\hookrightarrow V$ is represented by the
matrix 
\[
M=\left(\begin{array}{ccccccc}
1 & 1 & 1 & 1 & 0 & 0 & 0\\
0 & -1 & -1 & 0 & 1 & 0 & 0\\
\vdots & 0 & -1 & -1 & 0 & 1 & 0\\
\vdots & -1 & 0 & -1 & \vdots & 0 & 1\\
\vdots & -2 & -1 & 0 & \vdots & \vdots & 0\\
\vdots & 0 & -2 & -1 & \vdots & \vdots & \vdots\\
0 & -1 & 0 & -2 & 0 & 0 & 0
\end{array}\right)
\]
which has determinant $\det M=-9$. Furthermore, since $\tau\colon V\rightarrow\mathbb{CP}^{2}$
consists of blow-ups of real points only, $H^{2}(G,\mathbb{Z}\langle B\rangle)\simeq\mathbb{Z}\langle B\rangle\otimes_{\mathbb{Z}}\mathbb{Z}_{2}$,
$H^{2}(G,\mathrm{Cl}(V))\simeq\mathrm{Cl}(V)\otimes_{\mathbb{Z}}\mathbb{Z}_{2}$
and the homomorphism $H^{2}(j_{*})$ is represented by the reduction
modulo $2$ of $M$. It follows from Theorem~\ref{thm:Top-carac} that $Y(3,3,3)$
is a $\mathbb{Q}$-acyclic fake plane, with $H_{1}(Y(3,3,3);\mathbb{Z})\simeq\mathbb{Z}_{9}$. 
\begin{question}
Is the real locus of $Y(3,3,3)$ birationally diffeomorphic to $\mathbb{R}^{2}$
? 
\end{question}

\section{Fake planes of Kodaira dimension 1}

In this section, we first recall basic results on the classification
of $\mathbb{Z}$-acyclic fake planes of Kodaira dimension~$1$. We
then present examples of contractible and $\mathbb{Q}$-acyclic fake
planes of Kodaira dimension $1$ arising from rational real cuspidal
quartics in $\mathbb{CP}^{2}$. We conclude this section with the
construction of families of $\mathbb{Q}$-acyclic fake planes $\mathbb{R}$-biregularly
birationally equivalent to $\mathbb{C}^{2}$.

\subsection{Elements of classification of $\mathbb{Z}$-acyclic fake planes of
Kodaira dimension $1$}

Smooth complex $\mathbb{Z}$-acyclic surfaces of Kodaira dimension
$1$ have been classified by Gurjar and Miyanishi \cite{GuMi87} and
tom Dieck and Petrie \cite{tDPe90}. We recall from \cite{DM15} the
following construction, which provides the real counter-part of this
classification.

\subsubsection{\noindent \label{sub:Kod1-acyclic-construction}}

Let $D\subset\mathbb{CP}^{2}$ be the union of a collection $E_{0,0},\ldots,E_{n,0}\simeq\mathbb{CP}^{1}$ of $n+1\geq3$ real lines intersecting in a same point
$x$ and a general real line $C_{1}\simeq\mathbb{CP}^{1}$. For
every $i=1,\ldots,n$, we choose a pair of coprime integers $1\leq\mu_{i,-}<\mu_{i,+}$
in such a way that for $v_{-}=^{t}(\mu_{1,-},\ldots,\mu_{n,-})\in\mathcal{M}_{n,1}(\mathbb{Z})$
and $\Delta_{+}=\mathrm{diag}(\mu_{1,+},\ldots,\mu_{n,+})\in\mathcal{M}_{n,n}(\mathbb{Z})$,
the following two conditions are satisfied: 
\begin{eqnarray}
\textrm{a) }\eta=n-1-{\displaystyle \sum_{i=1}^{n}\frac{1}{\mu_{i,+}}}>0 & \textrm{and} & b)\textrm{ The matrix }\mathcal{N}=\left(\begin{array}{cc}
-1 & -1\\
v_{-} & \Delta_{+}
\end{array}\right)\textrm{ belongs to }\mathrm{GL}_{n+1}(\mathbb{Z}).\label{eq:Kod1-Conditions}
\end{eqnarray}
Then we let $\tau\colon V\rightarrow\mathbb{CP}^{2}$ be the
smooth real projective surface obtained by the following blow-up procedure: 

1) We first blow-up $x$ with exceptional divisor $C_{0}\simeq\mathbb{CP}^{1}$.
The resulting surface is isomorphic to the Hirzebruch surface $\pi_{1}\colon \mathbb{F}_{1}\rightarrow\mathbb{CP}^{1}$
with $C_{0}$ as the negative section of $\pi_{1}$, the proper transforms
of $E_{0,0},\ldots,E_{n,0}$ are fibers of $\pi_{1}$ while the strict
transform of $C_{1}$ is a section of $\pi_{1}$ disjoint from $C_{0}$. 

2) Then for every $i=1,\ldots,n$, we perform a sequence of blow-up
of real points, starting with the blow-up of $p_{i}=C_{1}\cap E_{i,0}$
in such a way that the following two conditions are satisfied: a)
the inverse image of $p_{i}$ is a chain of $\mathbb{CP}^{1}$ containing
a unique $(-1)$-curve $A(p_{i})$ and b) the coefficients of
$A(p_{i})$ in the total transform of $C_{1}$ and $E_{i,0}$
are equal to $\mu_{i,-}$ and $\mu_{i,+}$ respectively. 
Recall \cite[Example 1.3]{DM15} that letting $x_{i,-}$ and $x_{i,+}$ be local defining equations 
for $C_{1}$ and $E_{i,0}$ in a Zariski open neighborhood $U_i$ of $p_i$, such a blow-up sequence is obtained as the minimal resolution
of the indeterminacies at $p_i$ of the rational map $x_{i,+}^{\mu_{i,+}}/x_{i,-}^{\mu_{i,-}}:U_i \dashrightarrow\mathbb{CP}^{1}$. We denote by $E_{i,1},\ldots,E_{i,r_{i}-1},E_{i,r_{i}}=A(p_{i})$ the corresponding exceptional divisors. 

3) Finally, we perform a sequence of blow-ups starting with the blow-up
of a real point $p_{0}\in E_{0,0}\setminus(C_{0}\cup C_{1})$, with
exceptional divisor $E_{0,1}\simeq\mathbb{CP}^{1}$ and continuing
with a sequence of $r_{0}-1\geq0$ blow-ups of real points $p_{0,i}\in E_{0,i}\setminus E_{0,i-1}$,
$i=1,\ldots,r_{0}-1$, with exceptional divisors $E_{0,i+1}$. We
let $A(p_{0})=E_{0,r_{0}}$. 

\begin{figure}[ht] 
\centering 
\input{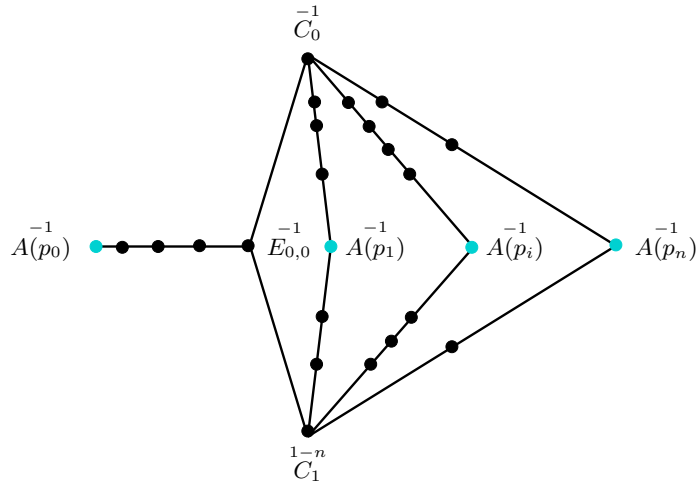} 
\caption{Dual graph of $\tau^{-1}(D)$.}         
\label{fig:logkod1} 
\end{figure} 

The union $B$ of the proper transforms of $C_{0}$,$C_{1}$, and
the divisors $E_{i,j}$, $i=0,\ldots,n$, $j=0,\ldots r_{i-1}$, is
a real subtree of the total transform of $D$ by the
so-constructed morphism $\tau\colon V\rightarrow\mathbb{CP}^{2}$.
By virtue of \cite[Lemma 4.5.3 p. 237]{MiyBook} and the proof of
Theorem 4.6.1 p. 238 in \emph{loc. cit. }we have 
\begin{eqnarray*}
K_{V}+B & \sim & (\pi_{1}\circ\tau)^{*}K_{\mathbb{CP}^{1}}+\ell+\sum_{i=1}^{n}((\pi_{1}\circ\tau)^{*}E_{i,0}-A(p_{i}))\sim(n-1)\ell-\sum_{i=1}^{n}A(p_{i})\sim\eta\ell+N=P+N
\end{eqnarray*}
where $\ell$ is the proper transform of a general real fiber of $\pi_{1}$,
and $N$ is an effective $\mathbb{Q}$-divisor supported on $\sum_{i=1}^{n}((\pi_{1}\circ\tau)^{*}E_{i,0})_{\mathrm{red}}-A(p_{i})$.
So the intersection matrix of $N$ is negative definite and since
${\displaystyle \eta>0}$ by hypothesis, it follows that $P$ is nef.
The surface $S=V\setminus B$ is thus a real surface of Kodaira dimension
$\kappa(S)=1$. Note further that the morphism $\pi_{1}\circ\tau\colon V\rightarrow\mathbb{CP}^{1}$
coincides with that induced by the positive part of $K_{V}+B$ and
that it restricts on $S$ to a fibration $S\rightarrow\mathbb{CP}^{1}$
with general fibers isomorphic to $\mathbb{C}^{*}$. We have the following
classification result:
\begin{thm}
\label{thm:Fake-Kodaira-1} \cite[Theorem 3.3]{DM15} Every $\mathbb{Z}$-acyclic
fake plane $S$ of Kodaira dimension $1$ is isomorphic to a surface
constructed by the above procedure. 
\end{thm}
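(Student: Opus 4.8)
The plan is to derive the statement from the complex classification of $\mathbb{Z}$-acyclic surfaces of Kodaira dimension $1$ due to Gurjar--Miyanishi \cite{GuMi87} and tom Dieck--Petrie \cite{tDPe90}, together with an analysis of the real structures such a surface can carry. First I would recall that a smooth $\mathbb{Z}$-acyclic surface $S$ with $\kappa(S)=1$ carries a $\mathbb{C}^{*}$-fibration $\rho_{0}\colon S\to\mathbb{P}^{1}$, namely the restriction to $S$ of the morphism induced by the positive part of $K_{V}+B$ for a suitable SNC completion $(V,B)$; being canonically attached to $S$, this fibration is unique up to an automorphism of its base. Hence, writing $\sigma$ for the real structure on $S$, the fibration $\sigma^{*}\rho_{0}$ differs from $\rho_{0}$ only by an automorphism of $\mathbb{P}^{1}$, and Galois descent yields a real form $B_{0}$ of $\mathbb{P}^{1}$ and a real $\mathbb{C}^{*}$-fibration $\rho\colon S\to B_{0}$ becoming $\rho_{0}$ after base change to $\mathbb{C}$. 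Since $\rho$ maps $S(\mathbb{R})\approx\mathbb{R}^{2}\neq\emptyset$ into $B_{0}(\mathbb{R})$, the base has real points, so $B_{0}\cong\mathbb{P}^{1}$ with $B_{0}(\mathbb{R})\approx\mathbb{RP}^{1}$, and the fiber of $\rho$ over a general real point is a real form of $\mathbb{C}^{*}$ with nonempty real locus.

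Next I would fix a minimal real SNC completion $(V,B)$ of $S$ to which $\rho$ extends as a $\mathbb{P}^{1}$-fibration $\bar{\rho}\colon V\to\mathbb{P}^{1}$. Then $B$ consists of two horizontal curves $C_{0}$ and $C_{1}$, the ``sections at infinity'' of the $\mathbb{C}^{*}$-fibration, together with vertical chains contained in the degenerate fibers. From the decomposition $K_{V}+B\sim P+N$ into a nef part $P$ with $P\cdot\ell=\eta$ and an effective part $N$ supported on vertical curves (as recalled in \S\ \ref{sub:Kod1-acyclic-construction} after \cite{MiyBook}), together with the $\mathbb{Z}$-acyclicity of $S$, one recovers as in \cite{GuMi87,tDPe90} the combinatorial type of the degenerate fibers: each contains a unique $(-1)$-curve $A(p_{i})$, whose coefficients $(\mu_{i,-},\mu_{i,+})$ in the total transforms of $C_{1}$ and of the corresponding fiber are coprime with $\mu_{i,-}<\mu_{i,+}$. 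The positivity $\eta>0$, equivalent to $\kappa(S)=1$, gives condition (a), and spelling out the matrix of $j_{*}\colon\mathbb{Z}\langle B\rangle\to\mathrm{Cl}(V)$ in terms of this data, the $\mathbb{Z}$-acyclicity criterion of Theorem \ref{thm:Top-carac}(a) becomes precisely the condition $\mathcal{N}\in\mathrm{GL}_{n+1}(\mathbb{Z})$, that is, condition (b).

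The crucial point, and the one I expect to be the main obstacle, is to show that $\sigma$ fixes every degenerate fiber, that each of them lies over a real point of $B_{0}(\mathbb{R})$, and that the relevant multiplicity is odd --- so that $C_{0}$, $C_{1}$, and all the $E_{i,j}$ are real. A priori the $\sigma$-invariant set of degenerate fibers could contain conjugate pairs over non-real points; over $B_{0}(\mathbb{R})\approx S^{1}$ one would then see only general fibers, so $S(\mathbb{R})$ would fiber over an arc or over all of $S^{1}$ with fibers homeomorphic to $\mathbb{R}^{*}$, and would be disconnected or homotopy equivalent to a circle, contradicting $S(\mathbb{R})\approx\mathbb{R}^{2}$. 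Making this quantitative, one combines the Borel--Haefliger cycle map \cite{BH61} with the criterion underlying Theorem \ref{thm:Top-carac}(b) --- that a relatively compact surface with connected boundary is diffeomorphic to $\mathbb{R}^{2}$ exactly when it is connected and $\mathbb{Z}_{2}$-acyclic --- to conclude that $S(\mathbb{R})$ can only have the right topology if each $A(p_{i})$ is real, its fiber lies over a real point, and its multiplicity is odd (the even case being excluded by $\mathbb{Z}_{2}$-acyclicity, the odd case being automatic from $\gcd(\mu_{i,-},\mu_{i,+})=1$). This is the $\mathbb{C}^{*}$-analogue of the multiplicity analysis used for real $\mathbb{A}^{1}$-fibrations elsewhere in the paper.

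Finally I would reconstruct $S$. All vertical $(-1)$-curves of $B$ being real, one contracts them successively by real birational morphisms down to $\mathbb{F}_{1}$, in which $C_{0}$ is the negative section and $C_{1}$ a disjoint section and the former degenerate fibers have become honest fibers through the images $p_{i}$; contracting $C_{0}$ then produces $\mathbb{CP}^{2}$ with the $n+1$ real lines $E_{i,0}$ through the point $x=\tau(C_{0})$ and the general real line $C_{1}$. Reading the blow-ups backwards --- over each $p_{i}$ with $i\geq1$ the chain realizing the prescribed pair $(\mu_{i,-},\mu_{i,+})$, and over $p_{0}$ a tower of $r_{0}$ real blow-ups --- exhibits $S$ as the output of the procedure of \S\ \ref{sub:Kod1-acyclic-construction} for these parameters. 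Since every such output is conversely a $\mathbb{Z}$-acyclic fake plane of Kodaira dimension $1$, as observed above, this gives the asserted classification.
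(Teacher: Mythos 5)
The paper itself gives no proof of Theorem~\ref{thm:Fake-Kodaira-1}: it is quoted from \cite[Theorem 3.3]{DM15}, so your proposal can only be measured against the argument one expects there. Your skeleton is of the right shape (invoke the complex classification of \cite{GuMi87,tDPe90}, descend the canonical log-fibration to a real $\mathbb{C}^{*}$-fibration over $\mathbb{P}^{1}$ with real points, then contract down to $\mathbb{F}_{1}$ and $\mathbb{CP}^{2}$ to recover the blow-up procedure). But the step you yourself single out as the crucial one is where the proposal fails. Your argument that the degenerate fibers lie over real points only treats the extreme case in which \emph{no} degenerate fiber is real (so that only general fibers sit over $B_{0}(\mathbb{R})$); it says nothing about the genuinely problematic mixed case where the unique fiber with an $\mathbb{A}^{1}$-component is real while some multiple $\mathbb{C}^{*}$-fibers occur in conjugate pairs over non-real points. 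Moreover, the claim that Borel--Haefliger together with the criterion behind Theorem~\ref{thm:Top-carac}~b) forces every $A(p_{i})$ to be real is contradicted by the paper itself: the later construction of $\mathbb{Q}$-acyclic fake planes of Kodaira dimension $1$ (conditions a'), b'), with conjugate pairs of non-real lines $E_{i,0},\overline{E}_{i,0}$) has all its multiple fibers over non-real conjugate points, yet satisfies $H^{2}(j_{*})$ isomorphism and has real locus diffeomorphic (indeed birationally diffeomorphic) to $\mathbb{R}^{2}$. So the topology of the real locus cannot be the mechanism excluding non-real degenerate fibers.

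What actually excludes them is the $\mathbb{Z}$-acyclicity, which your argument never uses at this point: $|H_{1}(S;\mathbb{Z})|=|\det\mathcal{N}|=\bigl|\prod_{i}\mu_{i,+}-\sum_{i}\mu_{i,-}\prod_{j\neq i}\mu_{j,+}\bigr|$, and if two degenerate fibers share the same multiplicity $\mu_{+}=b\geq2$ this integer is divisible by $b$. Conjugate fibers have equal invariants, so $\mathbb{Z}$-acyclicity forces the multiple fibers to be pairwise non-conjugate, hence each one is $\sigma$-invariant and lies over a real point (the fiber containing the $\mathbb{A}^{1}$-component is invariant because it is unique). This is exactly why the paper's conjugate-pair construction ``is never $\mathbb{Z}$-acyclic as $\nu_{i,+}\geq2$''. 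Relatedly, your assertion that the multiplicities must be odd is both unnecessary --- the procedure of \S\ \ref{sub:Kod1-acyclic-construction} imposes no parity condition, and $H^{2}(j_{*})$ is the mod~$2$ reduction of a matrix already of determinant $\pm1$ --- and wrongly justified, since coprimality of $\mu_{i,-}$ and $\mu_{i,+}$ does not give oddness; this looks imported from the $\mathbb{A}^{1}$-fibration statement of Theorem~\ref{thm:RealPlan-negKappa-Carac}. Finally, two smaller points would still need securing in your last step: ruling out a twisted $\mathbb{C}^{*}$-fibration (an irreducible $2$-section at infinity) and showing that $\sigma$ does not exchange the two sections $C_{0}$ and $C_{1}$ (so that the fiber chains, and hence all blown-up points, are componentwise real); both again come from the $\mathbb{Z}$-acyclic structure of the boundary rather than from the real locus.
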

\noindent We do not know whether any of these surfaces is $\mathbb{R}$-biregularly
birationally equivalent to $\mathbb{C}^{2}$.

\subsection{Fake planes obtained from rational real cuspidal quartic curves }

\subsubsection{\label{sub:OvoideA-Kod1}First construction }

Let $D\subset\mathbb{CP}^{2}$ be a real smooth rational quartic with
a unique cusp $p_{0}$ of multiplicity $3$ and a unique flex $q$
at which $D$ and its tangent $T_{q}$ intersect with multiplicity
$4$. For instance, $D$ is the zero locus of the homogeneous polynomial
$z^{4}-xy^{3}\in\mathbb{C}[x,y,z]$, the corresponding points $p_{0}$
and $q$ being $[1:0:0]$ and $[0:1:0]$. 
 
 Let $\tau\colon V\rightarrow\mathbb{CP}^{2}$
be the blow-up of any real point $p$ of $D$ different from $p_{0}$
and $q$, say with exceptional divisor $E_{0,1}$. Let $E_{0,0}$
and $E_{1,0}$ be the proper transforms of $D$ and $T_{q}$ respectively
in $V$ and let $B=E_{0,0}\cup E_{1,0}$. Then $S=V\setminus B$ is a contractible
fake plane of Kodaira dimension $1$. Indeed, the fact that $S$ is
a $\mathbb{Z}$-acyclic fake plane of Kodaira dimension $1$ can be
deduced directly from Theorem~\ref{thm:Fake-Kodaira-1} by comparing the minimal
log-resolution $\beta\colon V'\rightarrow V$ of the pair $(V,B)$ depicted in Figure~\ref{fig:logkod1-dual1} 
below with Figure \ref{fig:logkod1}  above for $n=2$, the curves
$A(p_{1})$ and $A(p_{2})$ corresponding in this case to
the proper transforms of the tangent line $T_{p_{0}}$ to $B$ at
$p_{0}$ and of the line $\ell_{p_{0}q}$ passing through $p_{0}$
and $q$. 

Alternatively, the $\mathbb{Z}$-acyclicity follows from Theorem \ref{Rk:Top-carac-rem}
via a computation similar to those in the previous section while the
fact that $S(\mathbb{R})\approx\mathbb{R}^{2}$ can be seen more directly
as follows. Since $\tau\colon V\rightarrow\mathbb{CP}^{2}$ consists of
the blow-up of a real point, $V(\mathbb{R})$ is a Klein bottle which
we view as circle bundle $\theta\colon V(\mathbb{R})\rightarrow S^{1}$
with fibers equal to the real loci of the lines through $p$ in $\mathbb{CP}^{2}$.
The sets $E_{0,1}(\mathbb{R})$ and $E_{1,0}(\mathbb{R})$ are two
sections of $\theta$ which do no intersect each other. On the other
hand \emph{$E_{0,0}(\mathbb{R})$} is a connected closed curve which
intersects $E_{0,1}(\mathbb{R})$ and $E_{1,0}(\mathbb{R})$ transversally
in one point and at the point $q$ with multiplicity $4$. It
follows that the homology classes of $E_{0,0}(\mathbb{R})$ and $E_{1,0}(\mathbb{R})$
in $H_{1}(V(\mathbb{R});\mathbb{Z}_{2})$ form a basis of this group.
Since $B(\mathbb{R})$ is not empty, the long exact sequence of relative
homology for the pair $(V(\mathbb{R}),B(\mathbb{R}))$ with $\mathbb{Z}_{2}$-coefficients
implies that $S(\mathbb{R})$ is connected and $\mathbb{Z}_{2}$-acyclic,
hence diffeomorphic to $\mathbb{R}^{2}$. 

\begin{figure}[!htb]
\input{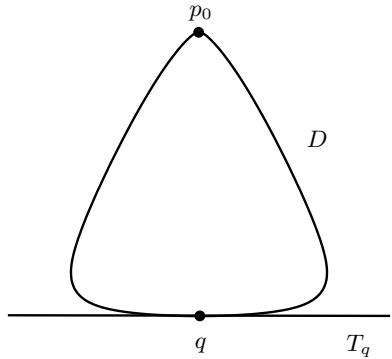} 
\caption{Real ovoid quartic.}
\label{fig:mediator-quartic}
\end{figure}

\begin{figure}[!htb]
\input{logkod1-dual1-f.tex} 
\caption{Dual graph of the total transform of $B\cup T_{p_{0}} \cup \ell_{p_{0}q}\cup E_{0,1}$ in a
minimal real log-resolution $\beta\colon V'\rightarrow V$ of
the pair $(V,B)$.}
\label{fig:logkod1-dual1}
\end{figure}

The contractibility of $S$ follows from the general classification
of such surfaces given in \cite{tDPe90}. It can also be seen directly
as follows: since it is $\mathbb{Z}$-acyclic, it follows from Hurewicz
theorem that $S$ is contractible if and only if it is simply connected.
Since $S\setminus E_{0,1}\simeq\mathbb{CP}^{2}\setminus(D\cup T_{q})$
is the complement of a closed submanifold of real codimension $2$
in $S$, $\pi_{1}(S)$ is a quotient of $\pi_{1}(\mathbb{CP}^{2}\setminus(D\cup T_{q}))$.
Furthermore, by virtue of \cite[Lemma 2.3(a)]{Zai00}, we have an
exact sequence $0\rightarrow\mathbb{Z\cdot\langle\mu}\rangle\rightarrow\pi_{1}(\mathbb{CP}^{2}\setminus(D\cup T_{q}))\rightarrow\pi_{1}(\mathbb{CP}^{2}\setminus D)\rightarrow0$
where $\mathbb{Z\cdot\langle\mu}\rangle$ is the free abelian group
generated by a meridian of the line $T_{q}$. Since $\pi_{1}(\mathbb{CP}^{2}\setminus D)$
is abelian \cite[Proposition 4.3 p.~130]{Dimca}, so is $\pi_{1}(\mathbb{CP}^{2}\setminus(D\cup T_{q}))$,
implying in turn that $\pi_{1}(S)$ is abelian, hence trivial.

\subsubsection{\label{sub:OvoideB-kod1}Second construction}

Let $D\subset\mathbb{CP}^{2}$ be a real smooth rational quartic with
a unique cusp $p_{0}$ of multiplicity $3$ and a pair of real flexes
$q_{1}$ and $q_{2}$ at which $D$ and its tangent $T_{q_{i}}$ intersect
with multiplicity $3$. For instance, $D$ is the zero locus of the
homogeneous polynomial $x^{3}y-z^{3}x+z^{4}\in\mathbb{C}[x,y,z]$,
the corresponding points $p_{0}$ and $q_{1}$ and $q_{2}$ being
$[0:1:0]$, $[1:0:0]$ and $[\frac{3}{2}:\frac{4}{27}:1]$.
Note that since a complex quartic with these properties 
is unique up to complex projective equivalence (see e.g. \cite[pp. 135, 146]{Na83}), 
the construction below applies verbatim to any other real model of $D$. 
 Let $\tau\colon V=\mathbb{F}_{1}\rightarrow\mathbb{CP}^{2}$
be the blow-up of the intersection point $p$ of $T_{q_{1}}$ with
$D$ different from $q_{1}$, say with exceptional divisor $F_{1}$. Since
$p$ is real, $\tau$ is a real morphism and hence, the complement
in $V$ of the proper transform $B$ of $D\cup T_{q_{1}}$ is a real surface.
The relations $\tau^{*}D=D+F_{1}$, $\tau^{*}T_{q_{1}}=T_{q_{1}}+F_{1}$ in $\mathrm{Cl}(V)$ and
$D\sim4T_{q_{1}}$  in $\mathrm{Cl}(\mathbb{CP}^{2})$
imply that the homomorphism $j_{*}\colon \mathbb{Z}\langle B\rangle\rightarrow\mathrm{Cl}(V)$
induced by the inclusion $j\colon B\hookrightarrow V$ is represented in
appropriate bases by a matrix of the form 
\[
M=\left(\begin{array}{cc}
4 & 1\\
-1 & -1
\end{array}\right).
\]
Furthermore, $H^{2}(G,\mathbb{Z}\langle B\rangle)\simeq\mathbb{Z}\langle B\rangle\otimes_{\mathbb{Z}}\mathbb{Z}_{2}$,
$H^{2}(G,\mathrm{Cl}(V))\simeq\mathrm{Cl}(V)\otimes_{\mathbb{Z}}\mathbb{Z}_{2}$
as $\tau$ is the blow-up of a real point, and the homomorphism $H^{2}(j_{*})$
is represented by the reduction modulo $2$ of $M$. It follows from
Theorem~\ref{thm:Top-carac} and Remark \ref{Rk:Top-carac-rem} that $S$
is a $\mathbb{Q}$-acyclic fake plane, with $H_{1}(S;\mathbb{Z})\simeq\mathbb{Z}_{3}$ as $\det(M)=-3$.
In contrast with the $\mathbb{Z}$-acyclic fake planes considered
in $\S$ \ref{sub:Kod1-acyclic-construction} which all admit a log-canonical
real fibration over $\mathbb{CP}^{1}$ with general fibers isomorphic
to $\mathbb{C}^{*}$, the surface $S$ just constructed admits a fibration
of this type over the complex line $\mathbb{C}$. Namely, letting
$\ell_{p_{0}q_{1}}=\{z=0\}$ and $\ell_{p_{0}p}=\{z-x=0\}$ be the lines through $p_{0}$, $q_{1}$ and the line through $p_{0}$, $p$ respectively,
the divisors $D$, $3T_{p_{0}}+T_{q_{1}}=\{x^3y=0\}$ and $3\ell_{p_{0}q_{1}}+\ell_{p_{0}p}=\{z^3(z-x)=0\}$
in $\mathbb{CP}^{2}$ generate a real pencil $V\dashrightarrow\mathbb{CP}^{1}$ containing $D=\{x^3y+z^3(z-x)=0\}$ as a member and
whose lift in the minimal real log-resolution $\beta\colon \tilde{V}\rightarrow V$
of the pair $(V,B)$ coincides with the $\mathbb{P}^{1}$-fibration
$\tilde{V}\rightarrow\mathbb{CP}^{1}$ generated by the proper transform
of $D$ (see Figure~\ref{fig:logkod1-2} below). This fibration restricts on $S$ to
a real morphism $S\rightarrow\mathbb{C}$ with general fibers isomorphic
to $\mathbb{C}^{*}$ and two degenerate fibers: one consisting of
the disjoint union of $\ell_{p_{0}p}\cap S\simeq\mathbb{C}$ and $\ell_{p_{0}q_{1}}\cap S\simeq\mathbb{C}^{*}$
the second component having multiplicity $3$, and a second one consisting
of the intersection $T_{p_{0}}\cap S=\mathbb{C}^{*}$, also with multiplicity
$3$. The fact that $\kappa(S)=1$ then follows from a similar computation
as in $\S$ \ref{sub:Kod1-acyclic-construction}, see e.g. \cite[Theorem 4.6.2 p. 237]{MiyBook},
which also renders the conclusion that $\tilde{V}\rightarrow\mathbb{CP}^{1}$
coincides with the fibration induced by the positive part of $K_{\tilde{V}}+\beta^{-1}(B)$. 

\begin{figure}[ht] 
\centering 
\input{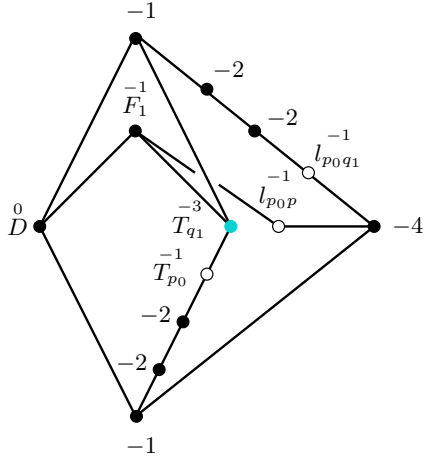} 
\caption{Dual graph of $\beta^{-1}(B\cup T_{p_{0}}\cup\ell_{p_{0}q_{1}}\cup\ell_{p_{0}p})$.}         
\label{fig:logkod1-2} 
\end{figure} 

\subsection{$\mathbb{Q}$-acyclic fake planes with real loci birationally diffeomorphic
to $\mathbb{R}^{2}$ (see also \cite[Remark 3.2]{DM15}).}

To construct families of $\mathbb{Q}$-acyclic fake planes $\mathbb{R}$-biregularly
birationally isomorphic to $\mathbb{C}^{2}$, we start similarly as in
$\S$ \ref{sub:Kod1-acyclic-construction} above with the union $D\subset\mathbb{CP}^{2}$
of a real line $E_{0,0}\simeq\mathbb{CP}^{1}$, a collection $(E_{1,0},\overline{E}_{1,0}),\ldots,(E_{m,0},\overline{E}_{m,0})$
of $m\geq1$ distinct pairs of non-real complex conjugated lines all
meeting in a same real point $x$, and of a general real line $C_{1}\simeq\mathbb{CP}^{1}$.
For every $i=1,\ldots,m$, we choose a pair of coprime integers $1\leq\nu_{i,-}<\nu_{i,+}$
in such a way that for 
\[
\begin{cases}
v_{-}= & ^{t}(\nu_{1,-},\ldots,\nu_{m,-},\nu_{1,-},\ldots,\nu_{m,-})\in\mathcal{M}_{2m,1}(\mathbb{Z})\\
\Delta_{+}= & \mathrm{diag}(\nu_{1,+},\ldots,\nu_{m,+},\nu_{1,+},\ldots,\nu_{m,+})\in\mathcal{M}_{2m,2m}(\mathbb{Z}),
\end{cases}
\]
the following two conditions are satisfied: 

\begin{equation}
\textrm{a') }\eta=2m-1{\displaystyle -2\sum_{k=1}^{m}\frac{1}{\nu_{k,+}}}>0  \quad\textrm{and}\quad b')\textrm{ The matrix }\mathcal{M}=\left(\begin{array}{cc}
-1 & -1\\
v_{-} & \Delta_{+}
\end{array}\right)\textrm{ belongs to }\mathrm{GL}_{2m+1}(\mathbb{Q}).\label{eq:Kod1-Conditions-rectif}
\end{equation}

Then we let $\tau\colon V\rightarrow\mathbb{CP}^{2}$ be the
real smooth projective surface obtained by the following blow-up procedure: 

1) We first blow-up $x$ with exceptional divisor $C_{0}\simeq\mathbb{CP}^{1}$.
The resulting surface is isomorphic the Hirzebruch surface $\pi_{1}\colon \mathbb{F}_{1}\rightarrow\mathbb{CP}^{1}$
with $C_{0}$ as the negative section of $\pi_{1}$, the proper transform
of $C_{1}$ is a section of $\pi_{1}$ disjoint from $C_{0}$, the
proper transform of $E_{0,0}$ is a real fiber of $\pi_{1}$ while
the proper transforms of the $E_{i,0}$ and $\overline{E}_{i,0}$,
$i=1,\ldots,m$ are pairs of non-real complex conjugate fibers of
$\pi_{1}$. 

2) Then for every $i=1,\ldots,m$, we perform a sequence of blow-ups
of pairs of non-real complex conjugate points, starting with the blow-up
of the points $p_{i}=C_{1}\cap E_{i,0}$ and $\overline{p}_{i}=C_{1}\cap\overline{E}_{i,0}$
in such a way that the following two conditions are satisfied: a)
the inverse images of $p_{i}$ and $\overline{p}_{i}$ are complex
conjugate chains of $\mathbb{CP}^{1}$ each containing a unique $(-1)$-curve
$A(p_{i})$ and $A(\overline{p}_{i})$ respectively and b)
the coefficients of $A(p_{i})$ (resp. of $A(\overline{p}_{i})$
in the total transform of $C_{1}$ and $E_{i,0}$ (resp. $\overline{E}_{i,0}$)
are equal to $\nu_{i,-}$ and $\nu_{i,+}$ respectively. We denote
by $E_{i,1},\ldots,E_{i,r_{i}-1},E_{i,r_{i}}=A(p_{i})$ and $\overline{E}_{i,1},\ldots,\overline{E}_{i,r_{i}-1},\overline{E}_{i,r_{i}}=A(\overline{p}_{i})$ the corresponding exceptional divisors. 

3) Finally, we perform a sequence of blow-ups starting with the blow-up
of a real point $p_{0}\in E_{0,0}\setminus(C_{0}\cup C_{1})$, with
exceptional divisor $E_{0,1}\simeq\mathbb{CP}^{1}$ and continuing
with a sequence of $r_{0}-1\geq0$ blow-ups of real points $p_{0,i}\in E_{0,i}\setminus E_{0,i-1}$,
$i=1,\ldots,r_{0}-1$, with exceptional divisors $E_{0,i+1}$. We
let $A(p_{0})=E_{0,r_{0}}$. 

\begin{figure}[ht] 
\centering       
\input{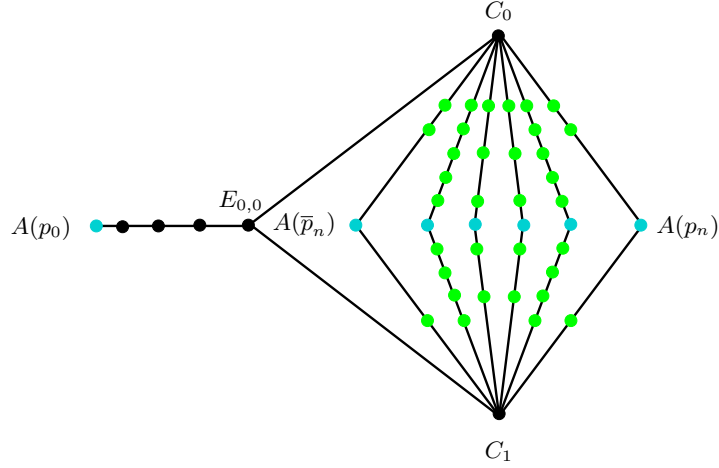} 
\caption{Dual graph of $\tau^{-1}(D)$.}
 \label{fig:logkod1-conjugate} 
\end{figure} 

The union $B$ of the proper transforms of $C_{0}$,$C_{1}$, and
the divisors $E_{i,j}$ and $\overline{E}_{i,j}$, $i=0,\ldots,m$,
$j=0,\ldots r_{i-1}$, is a real subtree of the total transform $\tilde{B}$
of $D$ by the so-constructed morphism $\tau\colon V\rightarrow\mathbb{CP}^{2}$.
Condition a') guarantees by the same argument as in $\S$\ref{sub:Kod1-acyclic-construction}
that $S=V\setminus B$ is a real surface of Kodaira dimension $1$.
The classes of the curves $C_{0}$, $E_{i,j}$ and $\overline{E}_{i,j}$,
$i=1,\ldots,m$, $j=0,\ldots r_{i-1}$, $E_{0,0}$, $C_{1}-E_{0,0}$,
$E_{0,i}-E_{0,0}$, $\overline{E}_{0,i}-E_{0,0}$, $i=1,\ldots,m$,
form a basis of the free abelian group $\mathbb{Z}\langle B\rangle$
generated by the irreducible components of $B$ while $\mathrm{Cl}(V)$
is freely generated by the classes of $C_{0}$, $E_{i,j}$ and $\overline{E}_{i,j}$,
$i=1,\ldots,m$, $j=0,\ldots r_{i-1}$, $E_{0,0}$, $A(p_{0})$,
$A(p_{i})$ and $A(\overline{p}_{i})$, $i=1,\ldots,m$ .
In these bases, the matrix of the homomorphism $j_{*}\colon \mathbb{Z}\langle B\rangle\rightarrow\mathrm{Cl}(V)$
induced by the inclusion takes the form 
\[
M=\left(\begin{array}{cc}
\mathrm{id_{n}} & *\\
0 & \mathcal{M}
\end{array}\right)
\]
where $n=2(\sum_{i=0}^{m}r_{i-1}+1)$. Since $\mathcal{M\in}\mathrm{GL}_{2m+1}(\mathbb{Q})$
by hypothesis, we deduce from Theorem \ref{thm:Top-carac} a) that
$S$ is $\mathbb{Q}$-acyclic. But it is never $\mathbb{Z}$-acyclic
as $\nu_{i,+}\geq2$ for every $i=1,\ldots,m$. On the other hand,
the real locus of $B$ is nonempty and the induced homomorphism $H^{2}(j_{*})\colon H^{2}(G;\mathbb{Z}\langle B\rangle)\rightarrow H^{2}(G;\mathrm{Cl}(V))$
is an isomorphism, implying that $S(\mathbb{R})\approx\mathbb{R}^{2}$
by virtue of b) in Theorem \ref{thm:Top-carac}. 
\begin{prop}
Every real surface $S$ as above\textup{\emph{ is a $\mathbb{Q}$-acyclic
fake plane of Kodaira dimension $\mbox{1}$, $\mathbb{R}$-biregularly
birationally equivalent to $\mathbb{C}^{2}$.}}
\end{prop}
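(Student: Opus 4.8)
The discussion preceding the statement shows that $S$ is a $\mathbb{Q}$-acyclic real plane of Kodaira dimension $1$ which is not $\mathbb{Z}$-acyclic, hence a fake plane; it thus only remains to prove that $S$ is $\mathbb{R}$-biregularly birationally equivalent to $\mathbb{C}^{2}$. The plan is to get rid, by birational modifications that are isomorphisms near the real locus, of the whole non-real part of the boundary $B$ — namely of the components $E_{i,j},\overline{E}_{i,j}$ with $1\leq i\leq m$, $0\leq j\leq r_{i}-1$ — so as to be left only with the real sub-tree $B_{0}=C_{0}\cup C_{1}\cup E_{0,0}\cup\cdots\cup E_{0,r_{0}-1}$ produced by steps 1) and 3) of the construction. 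The surface so obtained will turn out to be isomorphic to $\mathbb{C}^{2}$: the only fibre modification left is the ``free'' chain created over the real point $p_{0}$ in step 3), and it does not raise the Kodaira dimension.

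Concretely, set $\hat{S}=V\setminus B_{0}$. Since the components of $B\setminus B_{0}$ are non-real, the open immersion $S=V\setminus B\hookrightarrow\hat{S}$ is the identity on $S(\mathbb{R})=\hat{S}(\mathbb{R})$ and the indeterminacy locus of its inverse, contained in the non-real curve $\hat{S}\setminus S$, misses $\hat{S}(\mathbb{R})$; so it is a birational diffeomorphism. Next let $\tau_{2}\colon V_{2}\to\mathbb{CP}^{2}$ be the morphism obtained from $\tau$ by performing only steps 1) and 3): since step 2) is carried out over points disjoint from those involved in steps 1) and 3), the contraction $c\colon V\to V_{2}$ of the non-real chains created in step 2) is a real morphism, an isomorphism over a neighbourhood of $V(\mathbb{R})$. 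All the centres blown up by $c^{-1}$ lie on $C_{1}\subset B_{0}$, hence outside $S_{0,2}:=V_{2}\setminus B_{0}$, so $c^{-1}$ restricts to an open immersion $S_{0,2}\hookrightarrow\hat{S}$ whose complement is again non-real. Thus $S$ and $S_{0,2}$ appear inside $\hat{S}$ as two Zariski-open subsets with non-real complements and common real locus $\hat{S}(\mathbb{R})$, and the resulting birational map $S\dashrightarrow S_{0,2}$ is a birational diffeomorphism.

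It remains to identify $S_{0,2}$. In $V_{2}$ the fibre over $o=\pi_{1}(E_{0,0})$ of the $\mathbb{P}^{1}$-fibration extending $\pi_{1}$ is the reduced chain $E_{0,0}+E_{0,1}+\cdots+E_{0,r_{0}}$ produced by the free blow-ups of step 3). Writing $e_{0},\dots,e_{r_{0}+1}$ for the standard basis of $\mathrm{Cl}(V_{2})$ attached to the blow-ups of $x,p_{0},p_{0,1},\dots,p_{0,r_{0}-1}$, one has $C_{1}\sim e_{0}$, $C_{0}\sim e_{1}$, $E_{0,0}\sim e_{0}-e_{1}-e_{2}$, and $E_{0,j}\sim e_{j+1}-e_{j+2}$ for $1\leq j\leq r_{0}-1$. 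A direct computation then gives $\det j_{*}=\pm1$ for $j_{*}\colon\mathbb{Z}\langle B_{0}\rangle\to\mathrm{Cl}(V_{2})$, so $S_{0,2}$ is $\mathbb{Z}$-acyclic by Theorem~\ref{thm:Top-carac} a); summing the above classes yields $B_{0}\sim 2e_{0}-e_{r_{0}+1}$, hence $K_{V_{2}}+B_{0}\sim -e_{0}+e_{1}+\cdots+e_{r_{0}}$, so $(K_{V_{2}}+B_{0})\cdot e_{0}=-1<0$ with $e_{0}$ nef, which forces $\kappa(S_{0,2})=-\infty$. By \cite{MS80} a $\mathbb{Z}$-acyclic complex surface of negative Kodaira dimension is isomorphic to $\mathbb{C}^{2}$, and by \cite{Kam75} its real structure is the standard one; hence $S_{0,2}$ is isomorphic to $\mathbb{C}^{2}$ as a real surface, and composing with the birational diffeomorphism $S\dashrightarrow S_{0,2}$ shows that $S$ is $\mathbb{R}$-biregularly birationally equivalent to $\mathbb{C}^{2}$.

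The step needing the most care is the construction of $S\dashrightarrow S_{0,2}$: the contraction $c$ does \emph{not} restrict to a morphism $\hat{S}\to S_{0,2}$, because the section $C_{1}$ passes through each non-real centre $p_{i}$, so one is forced to work through the common open overcompletion $\hat{S}$ rather than through a single birational morphism. Everything else reduces to the explicit divisor-class and intersection computations on $V_{2}$ indicated above.
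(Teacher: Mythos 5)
Your argument is correct, and its first half is essentially the paper's: the paper also factors $\tau$ through the surface $\tau'\colon V'\to\mathbb{CP}^{2}$ obtained by performing only steps 1) and 3), and uses the real morphism $\beta\colon V\to V'$ (blow-ups of non-real conjugate points only) to identify $S(\mathbb{R})$ with the real locus of $S'=V'\setminus B'$, $B'=C_{0}\cup C_{1}\cup\bigcup_{j=0}^{r_{0}-1}E_{0,j}$, which is your $S_{0,2}$; your detour through the common open overcompletion $\hat{S}$ is just a more careful way of phrasing this step (indeed $\beta$ does not map $S$ into $S'$ because of the curves $A(p_{i})$, so one really must shrink to the open set $\hat{S}\setminus(\text{chains})\subset S$, exactly as you do). Where you genuinely diverge is in the identification of $S'$ with $\mathbb{C}^{2}$: the paper proceeds constructively, blowing up the real point $C_{1}\cap E_{0,0}$ and contracting the real chain $C_{0},E_{0,0},\ldots,E_{0,r_{0}-1}$ (all contained in the boundary, hence harmless for the real locus) to land in a Hirzebruch surface $\mathbb{F}_{n}$ with $C_{1}$ a fiber and the new curve a section, so that $S'$ is identified with $\mathbb{F}_{n}\setminus(\text{fiber}\cup\text{section})\simeq\mathbb{C}^{2}$; you instead verify numerically that $j_{*}\colon\mathbb{Z}\langle B'\rangle\to\mathrm{Cl}(V')$ is unimodular and that $(K_{V'}+B')\cdot e_{0}<0$ with $e_{0}$ nef, and then invoke \cite{MS80} and \cite{Kam75} to conclude $S'\simeq\mathbb{C}^{2}$ as a real surface (your class and intersection computations, and the SNC hypothesis needed to read off $\kappa$ from $K_{V'}+B'$, all check out). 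Your route is shorter and even yields a biregular isomorphism $S'\simeq\mathbb{C}^{2}$, at the price of relying on the Miyanishi--Sugie characterization of $\mathbb{C}^{2}$ and Kambayashi's theorem (both of which the paper itself quotes in the negative Kodaira dimension section, so this is legitimate); the paper's elementary-transformation argument is more self-contained and explicit, in the spirit of its treatment of the surfaces $H[-2p,2p]$, where the reduction is instead to Theorem~\ref{thm:Real-plane-NegKappa-Rectif} -- a third variant you could also have used here, since $S'$ carries an evident real $\mathbb{A}^{1}$-fibration with a single degenerate fiber.
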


\begin{proof}
It remains to show that $S(\mathbb{R})$ is birationally diffeomorphic
to $\mathbb{R}^{2}$. The real birational morphism $\tau\colon V\rightarrow\mathbb{CP}^{2}$
factors through the real projective surface $\tau'\colon V'\rightarrow\mathbb{CP}^{2}$
obtained from $\mathbb{CP}^{2}$ by blowing-up $x$ with exceptional
divisor $C_{0}$ and then only performing the last sequence of blow-ups
described in 3) above, the one starting with the blow-up of a real
point $p_{0}\in E_{0,0}\setminus(C_{0}\cup C_{1})$. Since the induced
real birational morphism $\beta\colon V\rightarrow V'$ consists of blow-ups
of pairs of non-real complex conjugate points only, it restrict to
a birational diffeomorphism between the real loci of $V$ and $V'$ respectively,
mapping $S(\mathbb{R})$ diffeomorphically onto the real loci of the
surface $S'=V'\setminus B'$ where $B'=C_{0}\cup C_{1}\cup\bigcup_{j=0}^{r_{0}-1}E_{0,j}$.
A similar counting argument as in \ref{sec:DiffBir} above shows that the surface obtained from $V'$ by first blowing-up the real point
$C_{1}\cap E_{0,0}$ with exceptional divisor $C$ and then contracting
successively the proper transforms of $C_{0}$, $E_{0,0}$, $E_{0,1},\ldots,E_{0,r_{0}-1}$
has Picard rank $2$, hence is a Hirzebruch surface $\pi_{n}\colon \mathbb{F}_{n}\rightarrow\mathbb{CP}^{1}$
for some $n\geq0$, in which the proper transforms of $C_{1}$ and
$C$ are respectively a real fiber of $\pi_{n}$ and a real section
of it, with self-intersection $r_{0}$. The so-constructed real birational
map $\theta\colon V'\dashrightarrow\mathbb{F}_{n}$ induces an isomorphism
between the real loci of $S'$ and that of $\mathbb{F}_{n}\setminus(C\cup C_{1})$,
and the composition $\theta\circ\beta\mid_{S}\colon S\rightarrow\mathbb{F}_{n}\setminus(C\cup C_{1})\simeq\mathbb{C}^{2}$
is the desired $\mathbb{R}$-biregular birational isomorphism. 

\begin{figure}[ht] 
\centering       
\input{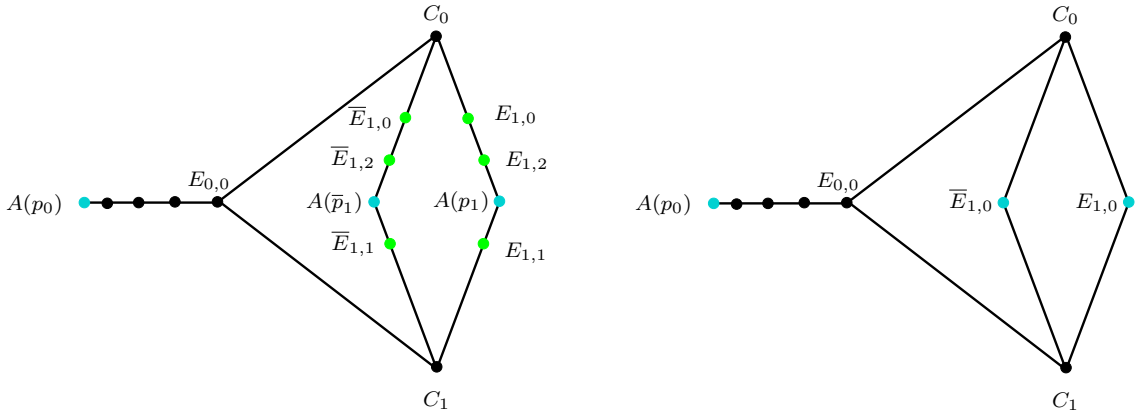} 
\caption{Dual graphs of $\tau^{-1}(D)$ and ${\tau'}^{-1}(D)$ in the case $m=1$, $r_1=3$, $(\nu_{1,-},\nu_{1,+})=(2,3)$.}
 \label{fig:logkod1-rectif} 
\end{figure} 

\end{proof}

\section{Fake planes of general type}

By virtue of \cite{Wak78}, the complement $S$ of an irreducible
rational cuspidal curve $B\subset\mathbb{C}\mathbb{P}^{2}$ is a smooth
affine surface of general type as soon as $B$ has a least three cusps.
Such a surface $S$ is $\mathbb{Q}$-acyclic by virtue of Theorem
\ref{thm:Top-carac} a), with $H_{1}(S;\mathbb{Z})\simeq\mathbb{Z}_{d}$,
where $d\geq3$ denotes the degree of $B$. If in addition $B$ is
a real curve with non empty real locus, then Theorem \ref{thm:Top-carac}
b) implies that $S$ is a real plane if and only if $d$ is odd. This
can be seen more directly as follows: when non empty, $B(\mathbb{R})$
is homeomorphic to a simple closed curve in $\mathbb{R}\mathbb{P}^{2}$,
whose homotopy class is either trivial if $d$ is even or equal to
a generator of $\pi_{1}(\mathbb{RP}^{2})$ if $d$ is odd. So $S(\mathbb{R})=\mathbb{RP}^{2}\setminus B(\mathbb{R})$
is either diffeomorphic to the disjoint union of $\mathbb{R}^{2}$
with an open Möebius band in the first case or to $\mathbb{R}^{2}$
in the second case. 

In this section, we first present different examples of contractible
fake planes constructed from arrangements of lines and rational real
cuspidal quartics in $\mathbb{CP}^{2}$ by the so-called cutting-cycle
construction of tom Dieck and Petrie \cite{tDPe93}. Then we construct
an infinite family of $\mathbb{Z}$-acyclic fake planes of general
type whose real loci are all birationally diffeomorphic to $\mathbb{R}^{2}$.

\subsection{Fake planes of general type obtained from rational real cuspidal
quartic curves }

\subsubsection{\label{sub:OvoideB-kod2} First construction}

We consider again as in $\S$ \ref{sub:OvoideB-kod1} a real smooth
rational quartic $D_{1}\subset\mathbb{CP}^{2}$ with a unique cusp
$p_{1,0}$ of multiplicity $3$ and a pair of real flexes $q_{1,1}$
and $q_{1,2}$ at which $D_{1}$ and its tangent $T_{q_{i,1}}(D_{1})$
intersect with multiplicity $3$. We let $\tau_{1}\colon V_{1}\rightarrow\mathbb{CP}^{2}$
be the real projective surface obtained from $\mathbb{CP}^{2}$ by
first blowing-up the intersection point $p_{1}$ of $T_{q_{1,1}}(D_{1})$
with $D_{1}$ different from $q_{1,1}$ with exceptional divisor $F_{1,1}$,
then blowing-up the intersection point of $F_{1,1}$ with the proper
transform of $D_{1}$ with exceptional divisor $F_{1,2}$ and then
blowing-up the intersection point of $F_{1,2}$ with the proper transform
of $D_{1}$, with exceptional divisor $A(p_{1})$. The complement
in $V_{1}$ of $B_{1}=D_{1}\cup T_{q_{1,1}}(D_{1})\cup F_{1,1}\cup F_{1,2}$
is a smooth $\mathbb{Z}$-acyclic real surface $S_{1}$ with $S_{1}(\mathbb{R})\approx\mathbb{R}^{2}$. 

\begin{figure}[ht] 
\centering 
\input{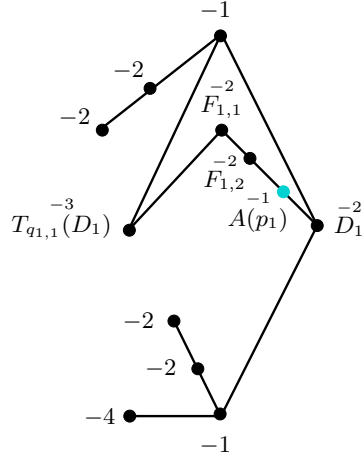} 
\caption{Dual graph of the total transform of $B_{1}\cup A(p_{1})$ in a
minimal real log-resolution $\beta_{1}\colon V_{1}'\rightarrow V_{1}$ of
the pair $(V_{1},B_{1})$.}         
\label{fig:logkod2-1} 
\end{figure} 

Indeed, by construction, 
\begin{eqnarray*}
\tau_{1}^{*}D_{1} & = & D_{1}+F_{1,1}+2F_{1,2}+3A(p_{1})\\
\tau_{1}^{*}T_{q_{1,1}}(D_{1}) & = & T_{q_{1,1}}(D_{1})+F_{1,1}+F_{1,2}+A(p_{1})
\end{eqnarray*}
so that using the relation $\tau_{1}^{*}D_{1}\sim4\tau_{1}^{*}T_{q_{1,1}}(D_{1})$
in $\mathrm{Cl}(V_{1})$, we obtain that in the bases of $\mathbb{Z}\langle B_{1}\rangle$
and $\mathrm{Cl}(V_{1})$ given by the classes of the real
curves, $T_{q_{1,1}}(D_{1})$, $F_{1,1}$, $F_{1,2}$, $D_{1}$ and
$T_{q_{1,1}}(D_{1})$, $F_{1,1}$, $F_{1,2}$, $A(p_{1})$ respectively,
the homomorphism $j_{*}\colon \mathbb{Z}\langle B_{1}\rangle\rightarrow\mathrm{Cl}(V_{1})$
induced by the inclusion $B_{1}\hookrightarrow V_{1}$ is represented
by the matrix 
\[
M=\left(\begin{array}{cccc}
1 & 0 & 0 & 4\\
0 & 1 & 0 & 3\\
0 & 0 & 1 & 2\\
0 & 0 & 0 & 1
\end{array}\right)\in\mathrm{GL}_{4}(\mathbb{Z}).
\]
So $S_{1}$ is $\mathbb{Z}$-acyclic by Theorem \ref{thm:Top-carac}
a), and the same type of arguments as in $\S$ \ref{sub:OvoideA-Kod1}
implies that $S_{1}$ is even contractible. On the other hand, since
$\tau_{1}\colon V_{1}\rightarrow\mathbb{CP}^{2}$ consists of blow-ups of
real points only, $H^{2}(j_{*})$ is represented by the reduction
modulo $2$ of $M$, which is again invertible. Since $B_{1}(\mathbb{R})$
is nonempty, $S_{1}$ is a real plane by Theorem \ref{thm:Top-carac}
b).

\subsubsection{\label{sub:Ramphoid-kod2} Second construction}

We let $D_{2}\subset\mathbb{CP}^{2}$ be a real rational ramphoid
quartic, i.e. with a unique cusp $p_{2,0}$ with multiplicity sequence
$(2,2,2)$, and three real flexes $q_{2,1}$, $q_{2,2}$, $q_{2,3}$
at which $D_{2}$ intersects its tangent $T_{q_{2,i}}(D_{2})$ with
multiplicity $3$. We let $\tau_{2}\colon V_{2}\rightarrow\mathbb{CP}^{2}$
be the real birational morphism obtained by blowing-up the intersection
point $p_{2}$ of $D_{2}$ with $T_{q_{2,1}}(D_{2})$ distinct from
$q_{2,1}$, with exceptional divisor $F_{2,1}$, then blowing-up the
intersection point of $F_{2,1}$ with the proper transform of $D_{2}$,
with exceptional divisor $F_{2,2}$ and then blowing-up the intersection
point of $F_{2,2}$ with the proper transform of $D_{2}$, with exceptional
divisor $A(p_{2})$. We let $S_{2}$ be the smooth real surface obtained
as the complement of $B_{2}=D_{2}\cup T_{q_{2,1}}(D_{2})\cup F_{2,1}\cup F_{2,2}$
in $V_{2}$. 

\begin{figure}[ht] 
\centering 
\input{logkod2-dual2-f.tex} 
\caption{Dual graph of the total transform of $B_{2}\cup A(p_{2})$ in a
minimal real log-resolution $\beta_{2}\colon V_{2}'\rightarrow V_{2}$ of
the pair $(V_{2},B_{2})$.}         
\label{fig:logkod2-2} 
\end{figure}

The same computation as in $\S$ \ref{sub:OvoideB-kod2} shows that
$S_{2}$ is a contractible real plane.

\subsubsection{\label{sub:Bicusp-kod2} Third construction}

Here we start with a real rational bi-cuspidal quartic $D_{3}\subset\mathbb{CP}^{2}$,
with two real cusps $p_{3,\infty}$ and $p_{3,0}$ with respective
multiplicity sequences $(2,2)$ and $(2)$ such that $D_{3}$ intersects
its tangent $T_{p_{3,0}}(D_{3})$ with multiplicity $3$ at $p_{3,0}$ and transversally at another real point $p_3$.
We let $\tau_{3}\colon V_{3}\rightarrow\mathbb{CP}^{2}$ be the smooth real
projective surface obtained from $\mathbb{CP}^{2}$ by first blowing-up
$p_{3}$ with exceptional divisor $F_{3,1}$, then
blowing-up the intersection point of $F_{3,1}$ with the proper transform
of $D_{3}$, with exceptional divisor $F_{3,2}$ and then blowing-up
the intersection point of $F_{3,2}$ with the proper transform of
$D_{3}$, with exceptional divisor $A(p_{3})$. We let $S_{3}$ be
the smooth real surface obtained as the complement of $B_{3}=D_{3}\cup T_{p_{3,0}}(D_{3})\cup F_{3,1}\cup F_{3,2}$
in $V_{3}$. 

\begin{figure}[ht] 
\centering 
\input{logkod2-dual3-f.tex} 
\caption{Dual graph of the total transform of $B_{3}\cup A(p_{3})$ in a
minimal real log-resolution $\beta_{3}\colon V_{3}'\rightarrow V_{3}$ of
the pair $(V_{3},B_{3})$.}         
\label{fig:logkod2-3} 
\end{figure}

Similar arguments as in $\S$ \ref{sub:OvoideB-kod2} imply that $S_{3}$
is a contractible real plane. 
\begin{prop}
\label{prop:three-generaltype-planes} The surfaces $S_{1}$, $S_{2}$
and $S_{3}$ constructed in $\S$ \ref{sub:OvoideB-kod2}, $\S$ \ref{sub:Ramphoid-kod2}
and $\S$ \ref{sub:Bicusp-kod2} above are pairwise non isomorphic
contractible fake planes of general type. \end{prop}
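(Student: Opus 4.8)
Each of $S_1$, $S_2$, $S_3$ is already known, from the constructions in $\S$\ref{sub:OvoideB-kod2}, $\S$\ref{sub:Ramphoid-kod2} and $\S$\ref{sub:Bicusp-kod2}, to be a smooth contractible real surface whose real locus is diffeomorphic to $\mathbb{R}^{2}$. So the plan is to prove the two remaining assertions: that $\kappa(S_i)=2$ for $i=1,2,3$ --- which already makes each $S_i$ \emph{fake}, since $\kappa(\mathbb{C}^{2})=-\infty$ --- and that $S_1$, $S_2$ and $S_3$ are pairwise non-isomorphic as complex algebraic surfaces.

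To compute the Kodaira dimensions I would pass to the minimal real log-resolution $\beta_i\colon V_i'\rightarrow V_i$ of the pair $(V_i,B_i)$, on which $D_i:=\beta_i^{-1}(B_i)$ is an SNC tree of rational curves whose weighted dual graph is that of Figures~\ref{fig:logkod2-1}, \ref{fig:logkod2-2} and \ref{fig:logkod2-3} respectively, once the $(-1)$-curve $A(p_i)$ is deleted (recall that $A(p_i)\not\subset B_i$). Then $\kappa(S_i)=\kappa(V_i',K_{V_i'}+D_i)$, and writing $K_{V_i'}+D_i=P_i+N_i$ for the Zariski--Fujita decomposition, in which $N_i$ is the maximal effective divisor supported on the admissible rational twigs and rods of $D_i$ for which $P_i$ is nef, the usual criterion gives $\kappa(S_i)=2$ if and only if $P_i^{2}>0$. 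Reading the self-intersection numbers off the three dual graphs one identifies $N_i$ and checks that $P_i^{2}>0$ in each case; the mechanics are those of \cite[\S\,4]{MiyBook}, and the computation runs parallel to the analysis of complements of cuspidal curves in \cite{Wak78}. Alternatively, one observes that $S_1$, $S_2$, $S_3$ arise from the cutting-cycle construction of \cite{tDPe93}, for which general type is automatic. Combined with contractibility and with $S_i(\mathbb{R})\approx\mathbb{R}^{2}$, this exhibits each $S_i$ as a contractible fake plane of general type.

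For the pairwise non-isomorphism, the crucial point is that a smooth affine surface of logarithmic general type carries neither an $\mathbb{A}^{1}$-fibration nor a $\mathbb{C}^{*}$-fibration, hence has, up to isomorphism, a unique minimal SNC completion: by the peeling and almost-minimal-model theory for open surfaces \cite{MiyBook}, an isomorphism $S_i\stackrel{\sim}{\rightarrow}S_j$ extends to a birational map between minimal SNC completions which --- lacking the $\mathbb{P}^{1}$-fibrations required to perform elementary transformations along the boundary --- must itself be an isomorphism carrying one boundary onto the other. It then suffices to compute the minimal SNC completion of each $S_i$ by successively contracting the superfluous $(-1)$-curves of $D_i$ (those meeting the rest of $D_i$ in at most two points) and to observe that the three resulting weighted trees are pairwise non-isomorphic; this can already be read off from the self-intersections occurring along them --- for instance $D_3$ contributes a $(-6)$-curve that does not appear in the completions of $S_1$ or $S_2$, and a similar inspection distinguishes $S_1$ from $S_2$. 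One could equally well compare the fundamental groups at infinity, which are those of the integral homology $3$-spheres bounding these contractible surfaces and are computable from the dual graphs.

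The step I expect to be the real obstacle is this last rigidity statement: justifying that an abstract isomorphism of the affine surfaces does induce an isomorphism of minimal SNC completions, so that the weighted boundary graph becomes a bona fide isomorphism invariant, requires the minimal-model machinery for open surfaces, and one must be careful either to check that the $V_i'$ produced by the construction are already minimal or to minimalize them correctly before comparing graphs. By contrast, the Kodaira-dimension computation, though it must be carried out for each of the three configurations, is routine once the negative part $N_i$ has been located.
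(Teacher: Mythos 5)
Your overall architecture is sensible, and your non-isomorphism step is close in spirit to the paper's: the paper also works with the minimal real log-resolutions $(V_i',B_i')$ and distinguishes the surfaces by their weighted boundary graphs. But instead of invoking a general uniqueness theorem for minimal SNC completions of surfaces of log general type, the paper gives a direct rigidity argument tailored to these boundaries: every irreducible component of $B_i'$ has self-intersection $\leq-1$ and every $(-1)$-curve of $B_i'$ meets three other boundary components, so any birational map between SNC completions which restricts to an isomorphism of the interiors is automatically a morphism, hence an isomorphism of pairs. This has two advantages over your route: it does not presuppose that the $S_i$ are of general type (your appeal to uniqueness of the minimal completion does, so in your scheme the whole proposition hinges on the Kodaira-dimension step), and the paper reuses the same rigidity to establish the Kodaira dimension itself.

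The genuine gap is exactly there. Your primary argument for $\kappa(S_i)=2$ --- locate $N_i$ and check $P_i^{2}>0$ --- is never carried out, and this inequality is the entire content of the claim; note also that the negative part coincides with the bark of the admissible twigs only after the pair has been made almost minimal, so it is not purely a matter of reading weights off Figures~\ref{fig:logkod2-1}--\ref{fig:logkod2-3}. Your fallback, that general type is ``automatic'' for surfaces arising from the cutting-cycle construction of \cite{tDPe93}, is not correct: the construction of \S\ \ref{sub:OvoideB-kod1} uses the very same ovoid quartic and tangent line, with a single blow-up at $p$ instead of three, and yields a fake plane of Kodaira dimension $1$, and \S\ \ref{sub:OvoideA-Kod1} likewise gives $\kappa=1$ from a cuspidal quartic; the Kodaira dimension genuinely depends on the length of the blow-up sequence and must be proved. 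The paper avoids any Zariski-decomposition computation by elimination: $\mathbb{C}^{2}$ is the only smooth $\mathbb{Z}$-acyclic real plane of negative Kodaira dimension, there are none of Kodaira dimension $0$ by \cite[Theorem 4.7.1 (1), p.~244]{MiyBook}, and those of Kodaira dimension $1$ are classified by Theorem~\ref{thm:Fake-Kodaira-1}; the boundary-rigidity argument then rules out an isomorphism of $S_i$ with $\mathbb{C}^{2}$ or with any surface of \S\ \ref{sub:Kod1-acyclic-construction} (with a little care in the case $r_{0}=1$, where one contraction of $E_{0,0}$ may occur). To complete your proposal you must either actually perform the three $P_i^{2}>0$ computations or substitute this elimination argument.
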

\begin{proof}
Letting $\beta_{i}\colon V_{i}'\rightarrow V_{i}$, $i=1,2,3$, be the minimal
real log-resolutions of the pair $(V_{i},B_{i})$ and $B_{i}'=\beta_{i}^{-1}(B_{i})$,
the pairs $(V_{i}',B_{i}')$ are real minimal SNC-completion of the
$S_{i}$. Note that every irreducible component of $B_{i}'$ has self-intersection
$\leq-1$ and that every $(-1)$-curve in $B_{i}'$ intersect three
other irreducible components of $B_{i}'$ (see Figure \ref{fig:logkod2-1}, \ref{fig:logkod2-2} and \ref{fig:logkod2-3}
above). It follows that every birational map $\varphi\colon (V'',B'')\dashrightarrow(V_{i}',B_{i}')$
from another SNC-completion $(V'',B'')$ of $S_{i}$ restricting to
an isomorphism between $V''\setminus B''$ and $V_{i}'\setminus B_{i}'$
is a morphism. Now if $S_{i}$ was isomorphic to $S_{j}$ for some
$j\neq i$, then the birational map $\varphi\colon (V_{j}',B_{j}')\dashrightarrow(V_{i}',B_{i}')$
extending an isomorphism $S_{j}\stackrel{\sim}{\rightarrow}S_{i}$
would be an isomorphism mapping $B_{j}'$ isomorphically onto $B_{i}'$.
In particular, the weighted dual graphs of $B_{j}'$ and $B_{i}'$
would be isomorphic, which is not the case. So $S_{1}$, $S_{2}$
and $S_{3}$ are pairwise non isomorphic. It remains to show that
they are all of general type. Since $\mathbb{C}^{2}$ is the only
smooth $\mathbb{Z}$-acyclic real plane and since there are no $\mathbb{Z}$-acyclic
real plane of Kodaira dimension $0$, to show that $\kappa(S_{i})=2$,
it is enough to check that $S_{i}$ is isomorphic neither to $\mathbb{C}^{2}$
nor to one of the surfaces described in $\S$~\ref{sub:Kod1-acyclic-construction}.
If $S_{i}$ was isomorphic to $\mathbb{C}^{2}$, then we would have
a morphism $\varphi\colon (\mathbb{CP}^{2},\ell)\rightarrow(V',B')$, where
$\ell\simeq\mathbb{CP}^{1}$ is a real line, restricting to an isomorphism
between $\mathbb{CP}^{2}\setminus\ell$ and $S_{i}$, which is impossible
as $\ell^{2}>0$. Now suppose that $S_{i}$ is isomorphic to a surface
obtained by the blow-up procedure $\tau''\colon V''\rightarrow\mathbb{CP}^{2}$
described in $\S$ \ref{sub:Kod1-acyclic-construction} and let $(V'',B'')$
be the corresponding SNC-completion with boundary $B''$ consisting
of the proper transforms of $C_{0}$,$C_{1}$, and the divisors
$E_{i,j}$, $i=0,\ldots,n$, $j=0,\ldots r_{i-1}$. By construction,
all irreducible components of $B''$ have self-intersection $\leq-1$
and $B''$ contains at most three $(-1)$-curves: the proper transform
of $C_{0}$ and $E_{0,0}$, and the proper transform of $C_{1}$ if
$n=2$, each of them intersecting at least three other irreducible
components of $B''$, except in the case where $r_{0}=1$ . Since
the birational map $\varphi\colon (V'',B'')\dashrightarrow(V_{i}',B_{i}')$
induced by the isomorphism $V''\setminus B''\simeq S_{i}\simeq V'\setminus B$
is a morphism, the structure of $B''$ implies that $\varphi$ is
in fact an isomorphism of pairs, except possibly when $r_{0}=1$
where it can consists of the contraction of $E_{0,0}$ followed by
an isomorphism of pairs. In both cases, we reach a contradiction by
comparing the weighted dual graph of $B_{i}'$ and the one $B''$
or its image by the contraction of $E_{0,0}$ in the case where $r_{0}=1$.
So $S_{i}$ is a fake plane of general type. \end{proof}
\begin{question}
Is the real locus of $S_{1}$, $S_{2}$ and $S_{3}$ birationally
diffeomorphic to $\mathbb{R}^{2}$ ? 
\end{question}

\subsection{$\mathbb{Z}$-acyclic fake planes of general type $\mathbb{R}$-biregularly
birationally equivalent to $\mathbb{C}^{2}$}

The projective dual $D\subset\mathbb{CP}^{2}$ of the nodal cubic
$C$ with equation $(x-y)(x^{2}+y^{2})-xyz=0$ is a real rational
quartic with three ordinary cusps: a real one $p_{0}$ corresponding
to the real flex $[1:1:0]$ of $C$ and a pair of non-real complex
conjugate ones $q$ and $\overline{q}$ corresponding to the pair of
non-real conjugate flexes $[i:1:0]$ and $[-i:1:0]$ of $C$. The
tangent $T_{p_{0}}(D)$ of $D$ at $p_{0}$ intersects $D$ with multiplicity
$3$ at $p_{0}$ and transversally at another real point $p$. Let
$(\mu,\nu)$ be a pair of positive integers such that $4\nu-\mu=\pm1$
and let $\tau\colon V\rightarrow\mathbb{CP}^{2}$ be the real birational
morphism obtained by first blowing-up $p$ with exceptional divisor
$E_{1}$ and then, similarly as in \ref{sub:Kod1-acyclic-construction} 2), blowing-up a sequence of real points on the successive
total transforms of $E_{1}$ in such a way that the following two
conditions are satisfied: a) the inverse image of $p$ is a chain
of curves isomorphic to $\mathbb{CP}^{1}$ containing a unique $(-1)$-curve
$A(p)$ and b) the coefficients of $A(p)$ in the total transform
of $D$ and $T_{p_{0}}(D)$ are equal to $\mu$ and $\nu$ respectively.
We denote the corresponding exceptional divisors by $E_{1},\ldots,E_{r-1},E_{r}=A(p)$
and we let $B=D\cup T_{p_{0}}(D)\cup\bigcup_{i=1}^{r-1}E_{i}$. The
weighted dual graph of the total transform of $B\cup A(p)$ in a real
minimal log-resolution $\beta\colon V'\rightarrow V$ of the pair $(V,B)$
is depicted in Figure \ref{fig:logkod2-tadam}.

\begin{figure}[!htb]
\input{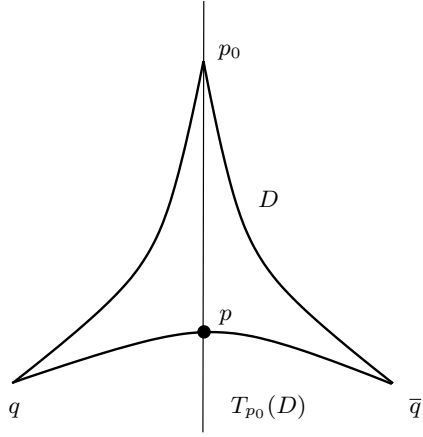} 
\caption{Real tricuspidal quartic}  
\label{fig:tricusp-quartic}
\end{figure}

\begin{figure}[!htb]
\input{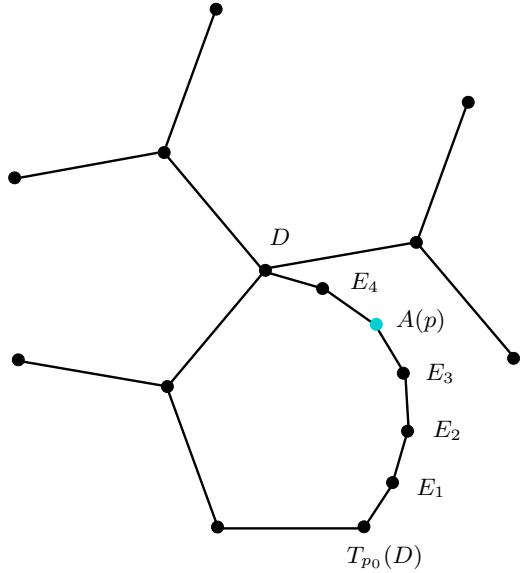} 
\caption{Dual graph of the total transform of $B\cup A(p)$ in a
minimal real log-resolution $\beta \colon V'\rightarrow V$ of
the pair $(V,B)$ in the case $(\mu,\nu)=(7,2)$.}  
\label{fig:logkod2-tadam}
\end{figure}

\begin{prop}
For every pair $(\mu,\nu)$ such that $4\nu-\mu=\pm1$, the surface
$S(\mu,\nu)=V\setminus B$ is a $\mathbb{Z}$-acyclic fake plane of
general type, whose real locus is birationally diffeomorphic to $\mathbb{R}^{2}$. \end{prop}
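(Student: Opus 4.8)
The plan is to check, in turn, that $S(\mu,\nu)$ is $\mathbb{Z}$-acyclic, that it is a real plane, that it has Kodaira dimension $2$, and that its real locus is birationally diffeomorphic to $\mathbb{R}^{2}$; the first three points follow from the tools of the previous sections, and the last one is the substantial part.

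For the first two points I would apply Theorem~\ref{thm:Top-carac} with $A=\mathbb{Z}$ directly to the pair $(V,B)$. Here $V$ is $\mathbb{R}$-rational and simply connected, while $B=D\cup T_{p_{0}}(D)\cup\bigcup_{i=1}^{r-1}E_{i}$ is connected and is a tree-like union of rational curves: the quartic $D$ is rational and unibranch at each of its three cusps, hence homeomorphic to its normalization and thus simply connected, and the remaining components are rational, so $B$ is simply connected. Writing $\tau^{*}D=\widetilde{D}+\sum_{i=1}^{r}c_{i}E_{i}$ and $\tau^{*}T_{p_{0}}(D)=\widetilde{T_{p_{0}}(D)}+\sum_{i=1}^{r}d_{i}E_{i}$ with $E_{r}=A(p)$, $c_{r}=\mu$, $d_{r}=\nu$, and using $D\sim 4T_{p_{0}}(D)$ in $\mathrm{Cl}(\mathbb{CP}^{2})$, one checks that in the bases of $\mathbb{Z}\langle B\rangle$ and $\mathrm{Cl}(V)$ given respectively by $\widetilde{D},\widetilde{T_{p_{0}}(D)},E_{1},\dots,E_{r-1}$ and by $\tau^{*}\ell,E_{1},\dots,E_{r-1},A(p)$, the matrix of $j_{*}\colon\mathbb{Z}\langle B\rangle\to\mathrm{Cl}(V)$ is, after clearing the $E_{i}$-columns by row operations, block triangular with an identity block and a $2\times2$ block with first row $(4,1)$ and second row $(-\mu,-\nu)$. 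Its determinant equals $\mu-4\nu=\mp1$, so $j_{*}$ is an isomorphism and $S(\mu,\nu)$ is $\mathbb{Z}$-acyclic by Theorem~\ref{thm:Top-carac}~a) (in particular $H_{1}(S(\mu,\nu);\mathbb{Z})=0$). Since $\tau$ consists of blow-ups of real points only, $G$ acts trivially on $\mathbb{Z}\langle B\rangle$ and on $\mathrm{Cl}(V)$, so $H^{2}(j_{*})$ is the reduction of that matrix modulo $2$, which is invertible because $\det$ is odd; as $B(\mathbb{R})\neq\emptyset$, Theorem~\ref{thm:Top-carac}~b) gives $S(\mu,\nu)(\mathbb{R})\approx\mathbb{R}^{2}$.

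That $\kappa(S(\mu,\nu))=2$ is obtained exactly as in the proof of Proposition~\ref{prop:three-generaltype-planes}. Since $S(\mu,\nu)$ is a $\mathbb{Z}$-acyclic real plane and there is no $\mathbb{Z}$-acyclic real plane of Kodaira dimension $0$, it suffices to show that $S(\mu,\nu)$ is isomorphic neither to $\mathbb{C}^{2}$ nor to one of the surfaces of \S\ref{sub:Kod1-acyclic-construction}. In the minimal real log-resolution $\beta\colon V'\to V$ of $(V,B)$ one reads off Figure~\ref{fig:logkod2-tadam} that every irreducible component of $B'=\beta^{-1}(B)$ has self-intersection $\leq-1$ and that every $(-1)$-curve of $B'$ meets at least three other components; hence every birational map from another SNC-completion of $S(\mu,\nu)$ to $(V',B')$ restricting to an isomorphism on the interiors is a morphism. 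An isomorphism $S(\mu,\nu)\simeq\mathbb{C}^{2}$ would then produce a birational morphism $(\mathbb{CP}^{2},\ell)\to(V',B')$ restricting to an isomorphism $\mathbb{CP}^{2}\setminus\ell\simeq S(\mu,\nu)$, which is impossible since $\ell^{2}>0$ while $B'$ has several components; an isomorphism of $S(\mu,\nu)$ with one of the surfaces of \S\ref{sub:Kod1-acyclic-construction} would force the weighted dual graph of $B'$, possibly after contracting a single $(-1)$-curve in the case $r_{0}=1$, to coincide with the corresponding boundary graph, which it does not. So $\kappa(S(\mu,\nu))=2$, and in particular $S(\mu,\nu)$ is a fake plane.

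It remains to show that $S(\mu,\nu)(\mathbb{R})$ is birationally diffeomorphic to $\mathbb{R}^{2}$, and this is the point where the hypothesis $4\nu-\mu=\pm1$ and the special shape of $D$ must really be used. The strategy I would follow is the one underlying the rectifiable examples of the previous sections: to exhibit a finite chain of real birational modifications of a projective completion of $S(\mu,\nu)$, each supported in the complement of $S(\mu,\nu)(\mathbb{R})$ — blow-ups of pairs of non-real complex conjugate points, contractions of pairs of non-real complex conjugate $(-1)$-curves, and blow-ups and contractions of (real) points and curves of the boundary — after which one reaches a surface carrying a real $\mathbb{A}^{1}$-fibration over $\mathbb{C}$ with a single degenerate fibre, to which Theorem~\ref{thm:Real-plane-NegKappa-Rectif} then applies. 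Concretely, I would start from the pencil on $\mathbb{CP}^{2}$ generated by $D$ and $4T_{p_{0}}(D)$, which on $V$ induces a rational fibration whose base locus over $S(\mu,\nu)$ sits near $A(p)$ and is controlled by the difference $4\nu-\mu=\pm1$, desingularize it by further blow-ups, and then perform conjugate elementary transformations along the resolutions of the non-real cusps $q,\overline{q}$ of $D$ so as to absorb the non-real part of the boundary; after admissible contractions of the (real) components $\widetilde{D}\cup\widetilde{T_{p_{0}}(D)}\cup\bigcup E_{i}$ and of the resolution of $p_{0}$, the boundary should reduce, thanks precisely to $4\nu-\mu=\pm1$, to a fibre-plus-section configuration in some Hirzebruch surface $\mathbb{F}_{n}$, whose complement is $\mathbb{C}^{2}$. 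The main obstacle is exactly the combinatorics of this last reduction: the boundary $B$ has a large real part (coming from $p_{0}$, $T_{p_{0}}(D)$ and the chain produced by $\tau$) and only a small non-real part (coming from $q,\overline{q}$), and one has to verify that the non-real modifications, together with the boundary contractions allowed, suffice to bring $B$ to such a fibre-plus-section configuration without ever meeting the real locus.
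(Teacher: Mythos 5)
Your first two steps are essentially the paper's: the $\mathbb{Z}$-acyclicity and the real-plane property via Theorem~\ref{thm:Top-carac} (with the determinant $4\nu-\mu=\pm1$ and its odd reduction mod $2$), and $\kappa=2$ by the completion/dual-graph argument of Proposition~\ref{prop:three-generaltype-planes}; these are fine. The gap is the third and substantial step. What you offer there is a program, not a proof, and you flag the missing verification yourself: you never exhibit the chain of real modifications, you do not show that the pencil generated by $D$ and $4T_{p_{0}}(D)$ together with conjugate elementary transformations near $q,\overline{q}$ terminates in the configuration you want, and the target you aim for is in fact too strong. Reducing the boundary to a fibre-plus-section configuration in a Hirzebruch surface would exhibit a surface isomorphic to $\mathbb{C}^{2}$, whereas the natural intermediate surface in this problem is not $\mathbb{C}^{2}$ in general, so the ``combinatorial reduction'' you postpone cannot be carried out in the form you describe except in the smallest case $(\mu,\nu)=(3,1)$.

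The idea your proposal is missing is the paper's use of the standard quadratic Cremona involution $\theta$ of $\mathbb{CP}^{2}$ centered at the three cusps $p_{0},q,\overline{q}$ of the tricuspidal quartic (blow up the three cusps, contract the real line through $q,\overline{q}$ and the conjugate pair of lines through $p_{0},q$ and $p_{0},\overline{q}$). This is a real map because the set of centers is conjugation-stable, and it transforms $D$ into a smooth conic $\tilde{D}$ and $T_{p_{0}}(D)$ into a real line $\tilde{T}$ meeting $\tilde{D}$ at $p$; lifting $\theta$ through the tower of blow-ups over $p$ relates $S$ to $\tilde{S}=\tilde{V}\setminus\tilde{B}$, whose boundary $\tilde{B}$ is now a chain of rational curves. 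The analogous matrix $\tilde{M}$ has determinant $2\nu-\mu$, which is odd because $4\nu-\mu=\pm1$, so $\tilde{S}$ is a $\mathbb{Q}$-acyclic real plane with $H_{1}(\tilde{S};\mathbb{Z})\simeq\mathbb{Z}_{2\nu\pm1}$ (this is why it is not $\mathbb{C}^{2}$ in general); since $\tilde{B}$ is a chain, \cite{Giz71} gives an $\mathbb{A}^{1}$-fibration, hence $\kappa(\tilde{S})=-\infty$, Theorem~\ref{thm:RealPlan-negKappa-Carac} produces a real $\mathbb{A}^{1}$-fibration, \cite[Proposition 2.15]{Du05} shows it has at most one degenerate fibre, and Theorem~\ref{thm:Real-plane-NegKappa-Rectif} then yields the birational diffeomorphism with $\mathbb{R}^{2}$. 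In other words, the rectification is obtained not by a direct boundary reduction inside the general-type model, but by passing through an auxiliary fake plane of negative Kodaira dimension; without this (or an equally concrete substitute), your last step remains an unproved claim, so the proposal does not establish the proposition.
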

\begin{proof}
By construction $\tau^{*}D=D+\mu A(p)+R$ and $\tau^{*}T_{p_{0}}(D)=T_{p_{0}}(D)+\nu A(p)+R'$
where $R$ and $R'$ are effective divisors supported on the union
of $E_{1},\ldots,E_{r-1}$. The free abelian group $\mathbb{Z}\langle B\rangle$
is generated by the classes of $T_{p_{0}}(D)$, $E_{i},\ldots,E_{r-1}$
and $D-4T_{p_{0}}(D)$ while $\mathrm{Cl}(V)$ is freely generated
by the classes of $T_{p_{0}}(D)$, $E_{1},\ldots,E_{r-1}$ and $A(p)$.
Since $\tau^{*}D\sim4\tau^{*}T_{p_{0}}(D)$ in $\mathrm{Cl}(V)$,
the matrix of the homomorphism $j_{*} \colon\mathbb{Z}\langle B\rangle\rightarrow\mathrm{Cl}(V)$
with respect to these bases has the form 
\[
M=\left(\begin{array}{cc}
\mathrm{id}_{r+1} & *\\
0 & 4\nu-\mu
\end{array}\right)\in\mathcal{M}_{r+2}(\mathbb{Z}).
\]
The hypothesis that $4\nu-\mu=\pm1$ implies that $M\in\mathrm{GL}_{r+2}(\mathbb{Z})$
so that $S$ is $\mathbb{Z}$-acyclic by Theorem \ref{thm:Top-carac}
a). Furthermore, $H^{2}(G,\mathbb{Z}\langle B\rangle)\simeq\mathbb{Z}\langle B\rangle\otimes_{\mathbb{Z}}\mathbb{Z}_{2}$,
$H^{2}(G,\mathrm{Cl}(V))\simeq\mathrm{Cl}(V)\otimes_{\mathbb{Z}}\mathbb{Z}_{2}$
as $\tau$ consists of the blow-ups of real points only, and the homomorphism
$H^{2}(j_{*})$ is represented by the reduction modulo $2$ of $M$.
Since $\mu$ is necessarily odd, $H^{2}(j_{*})$ is thus an isomorphism
and since $B(\mathbb{R})$ is not empty, we conclude from Theorem~\ref{thm:Top-carac} b) that $S$ is a real plane. The fact that $\kappa(S)=2$
follows from the same argument as in the proof of Proposition~\ref{prop:three-generaltype-planes}
above using the structure of the weighted dual graph of the total
transform of $B$ in a real minimal log-resolution $\beta\colon V'\rightarrow V$
of the pair $(V,B)$ (see Figure \ref{fig:logkod2-tadam}).

It remains to show that $S$ is $\mathbb{R}$-biregularly birationally
equivalent to $\mathbb{C}^{2}$. Let $\theta:\mathbb{CP}^{2}\dashrightarrow\mathbb{CP}^{2}$
be the real birational involution consisting of the blow-up of the
points $p_{0}$ with exceptional divisor $L\simeq\mathbb{P}_{\mathbb{R}}^{1}$,
$q$ and $\overline{q}$ followed by the contraction of the proper
transforms of the real line $L_{1}\simeq\mathbb{P}_{\mathbb{R}}^{1}$
passing through $q$ and $\overline{q}$ and of the pair of non-real
conjugate lines passing through $p_{0}$ and $q$ and $p_{0}$ and
$\overline{q}$ respectively. The images of $D$, $T_{p_{0}}(D)$
and $L$ by $\theta$ are respectively a smooth conic $\tilde{D}$,
a real line $\tilde{T}$ intersecting $\tilde{D}$ at $p$ and another
real point $p_{\infty}$, and the tangent line $\tilde{L}$ of $\tilde{D}$
at $p_{\infty}$, see Figure~\ref{fig:tricusp-quartic-rectif} below. 

\begin{figure}[!htb]
\input{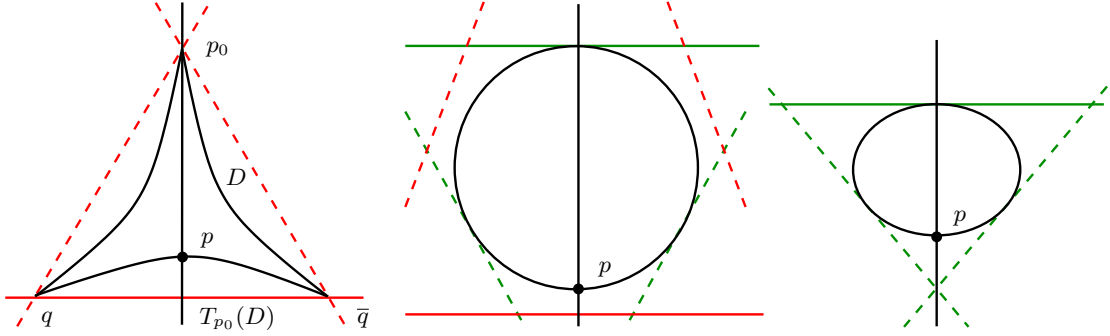} 
\caption{Behavior of the tricuspidal quartic under the standard Cremona transformation.}  
\label{fig:tricusp-quartic-rectif}
\end{figure}

This map $\theta$ lifts to a real birational map
$\tilde{\theta}\colon V\dashrightarrow\tilde{V}$ to the real projective
surface $\tilde{\tau}:\tilde{V}\rightarrow\mathbb{CP}^{2}$ obtained
from $\mathbb{CP}^{2}$ by blowing-up the real point $p_{1}\in\tilde{L}\setminus(\tilde{D}\cap\tilde{L})$
on which $L_{1}$ was contracted and performing the same sequence
of blow-ups as for the construction of $V$ over the point $p\in\tilde{D}\cap\tilde{T}$.
Let $\tilde{L}_{1}$ and $\tilde{E}_{1},\ldots,\tilde{E}{}_{r-1},\tilde{E}{}_{r}=\tilde{A}(p)$
be the corresponding exceptional divisors of $\tilde{\tau}$ over
$p_{1}$ and $p$ respectively and let $\tilde{B}=\tilde{D}\cup\tilde{T}\cup\tilde{L}\cup\bigcup_{i=1}^{r-1}\tilde{E}_{i}$.
Then $\tilde{\theta}$ restricts to a diffeomorphism between the real
loci of the surfaces $S$ and $\tilde{S}=\tilde{V}\setminus\tilde{B}$.
Using the relations 
\[
\begin{cases}
\tilde{\tau}^{*}\tilde{T}=\tilde{\tau}_{*}^{-1}\tilde{T}+\tilde{L}_{1}+\nu\tilde{A}(p)+R_{1} & \sim\tilde{\tau}^{*}\tilde{L}\\
\tilde{\tau}^{*}\tilde{D}=\tilde{\tau}_{*}^{-1}\tilde{D}+\mu\tilde{A}(p)+R_{2} & \sim2\tilde{\tau}^{*}\tilde{L}
\end{cases}
\]
in the divisor class group of $\tilde{V}$, where $R_{1}$ and $R_{2}$
are effective divisors supported on $\bigcup_{i=1}^{r-1}\tilde{E}_{i}$,
we conclude that the homomorphism $\tilde{j}{}_{*}:\mathbb{Z}\langle\tilde{B}\rangle\rightarrow\mathrm{Cl}(\tilde{V})$
induced by the inclusion $\tilde{B}\hookrightarrow\tilde{V}$ is represented
in appropriate bases by a matrix of the form 
\[
\tilde{M}=\left(\begin{array}{ccc}
\mathrm{id}_{r+1} & * & *\\
0 & 0 & -1\\
0 & -\mu & -\nu
\end{array}\right).
\]
Since $4\nu-\mu=\pm1$, $\mu$ is odd, $\tilde{M}\in\mathrm{GL}_{r+3}(\mathbb{Q})$,
and we deduce from Theorem \ref{thm:Top-carac} a) and Remark \ref{Rk:Top-carac-rem}
that $\tilde{S}$ is $\mathbb{Q}$-acyclic, with $H_{1}(\tilde{S},\mathbb{Z})\simeq\mathbb{Z}_{\mu}$.
So $\tilde{S}$ is in particular affine. Furthermore, the proper transform
of $\tilde{L}$ in a minimal real log-resolution $\tilde{\beta}:\tilde{V}'\rightarrow\tilde{V}$
of the pair $(\tilde{V},\tilde{B})$ is a $(-1)$-curve. Contracting
it, we obtain a completion of $\tilde{S}$ into a smooth real projective
surface $(\tilde{V}'',\tilde{B}'')$ whose boundary $\tilde{B}''$
is a chain of rational curves. It follows from \cite{Giz71} that
$\tilde{S}$ admits an $\mathbb{A}^{1}$-fibration over $\mathbb{C}$.
So $\kappa(\tilde{S})=-\infty$, and we then deduce from Theorem \ref{thm:RealPlan-negKappa-Carac}
that $\tilde{S}$ admits a real $\mathbb{A}^{1}$-fibration $\tilde{\rho}:\tilde{S}\rightarrow\mathbb{C}$.
By \cite[Proposition 2.15]{Du05}, $\tilde{\rho}$ has at most one
degenerate fiber, and so, $\tilde{S}$ is $\mathbb{R}$-biregularly
equivalent to $\mathbb{C}^{2}$ by virtue of Theorem \ref{thm:Real-plane-NegKappa-Rectif}.
This completes the proof.
\end{proof}

\begin{example}
In the simplest case $(\mu,\nu)=(3,1)$, the dual graph of the total
transform of $\tilde{L}_{1}\cup\tilde{B}\cup\tilde{A}(p)$ in the
minimal log-resolution $\tilde{\beta}\colon \tilde{V}'\rightarrow\tilde{V}$
of the pair $(\tilde{V},\tilde{B}=\tilde{D}\cup\tilde{T}\cup\tilde{L}\cup\tilde{E}_{1}\cup\tilde{E}_{2})$
is depicted on the right hand side of Figure~\ref{fig:logkod2-tadam-rectif},
\begin{figure}[!htb]
\input{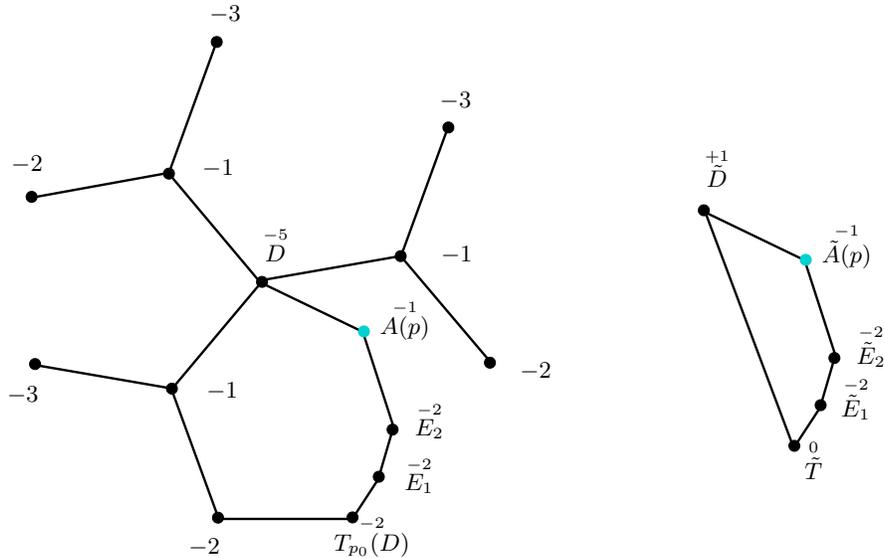} 
\caption{Dual graphs of the total transforms of $B\cup A(p)$ and $\tilde B\cup \tilde A(p)\cup\tilde{L}_1$ in a minimal log-resolution of the pairs $(V, B)$ and $(\tilde{V},\tilde{B})$ respectively for $(\mu,\nu)=(3,1)$.}  
\label{fig:logkod2-tadam-rectif}
\end{figure}
where $E_{p_{\infty},1}$ and $E_{p_{\infty},2}$ denote the two exceptional
divisors of $\tilde{\beta}$ over $p_{\infty}$. Let $\gamma:\tilde{V}'\dashrightarrow\tilde{V}''$
be the real birational map consisting of the blow-up of the real point
$E_{p_{\infty,1}}\cap E_{p_{\infty},2}$ with exceptional divisor
$C$, followed by the contraction of $\tilde{L}$ and $\tilde{D}$.
The total image $\tilde{B}''=E_{p_{\infty},2}\cup C\cup E_{p_{\infty},1}\cup\tilde{T}\cup\tilde{E}_{1}\cup\tilde{E}_{2}$
of $\tilde{B}'$ by $\gamma$ is a chain of rational curves with self-intersections
$(0,-1,-3,-2,-2,-2)$. The complete linear system $|E_{p_{\infty,2}}|$
on $\tilde{V}''$ defines a real $\mathbb{P}^{1}$-fibration $\overline{\rho}\colon \tilde{V}''\rightarrow\mathbb{CP}^{1}$
having $C$ has a section and $E_{p_{\infty,1}}+3\tilde{T}+3\tilde{L}_{1}+2\tilde{E}_{1}+\tilde{E}_{2}$
as a unique degenerate fiber. The restriction of $\overline{\rho}$
to $\tilde{S}\simeq\tilde{V}''\setminus\tilde{B}''$ is a real $\mathbb{A}^{1}$-fibration
$\tilde{\rho}:\tilde{S}\rightarrow\mathbb{C}$ having $\tilde{L}_{1}\cap\tilde{S}\simeq\mathbb{C}$
has unique degenerate fiber, of multiplicity $3$.
\end{example}

\bibliographystyle{amsplain} 

\end{document}